% ------------------------------------------------------------------------
%%%%%%%%%%%%%%%%%%%%%%%%%%%%%%%%%%%%%%%%%%%%%%%%%%%%%%%%%%%%%%%%%%%%%%%%%%
\documentclass[12pt, reqno]{amsart}
\usepackage{amssymb, graphicx, color}
\usepackage[bookmarksnumbered, colorlinks, plainpages]{hyperref}
\hypersetup{colorlinks=true,linkcolor=red, anchorcolor=green, citecolor=cyan, urlcolor=red, filecolor=magenta, pdftoolbar=true}
\textheight 22.5truecm \textwidth 14.5truecm
\setlength{\oddsidemargin}{0.35in}\setlength{\evensidemargin}{0.35in}

\setlength{\topmargin}{-.5cm}
\allowdisplaybreaks

%
%
% THEOREM Environments (Examples)-----------------------------------------
%

\newtheorem{thm}{Theorem}[section]

\newtheorem{cor}[thm]{Corollary}
\newtheorem{lem}[thm]{Lemma}

\newtheorem{prop}[thm]{Proposition}

\newtheorem{remark}[thm]{Remark}
\newtheorem{rk}[thm]{Remark}

\theoremstyle{definition}
\theoremstyle{remark}
\numberwithin{equation}{section}

\newcommand{\R}{{\mathbb R}}

\newcommand{\Z}{{\mathbb Z}}
\newcommand{\C}{{\mathbb C}}

\newcommand{\M}{{\mathcal M}}

\newcommand{\bs}{\begin{split}}
\newcommand{\es}{\end{split}}

\newcommand{\be}{\begin{eqnarray*}}
\newcommand{\ee}{\end{eqnarray*}}
\newcommand{\beq}{\begin{align}}
\newcommand{\eeq}{\end{align}}

\def\Q{\mathcal{Q}}
\def\1{\mathbf{1}}
%%%%%%%%%%%%%%%%%%%%%%%%%%

\begin{document}

%-------------------------------------------------------------------------
% editorial commands: to be inserted by the editorial office
%
%\firstpage{1} \volume{228} \Copyrightyear{2004} \DOI{003-0001}
%
%
%\seriesextra{Just an add-on}
%\seriesextraline{This is the Concrete Title of this Book\br H.E. R and S.T.C. W, Eds.}
%
% for journals:
%
%\firstpage{1}
%\issuenumber{1}
%\Volumeandyear{1 (2004)}
%\Copyrightyear{2004}
%\DOI{003-xxxx-y}
%\Signet
%\commby{inhouse}
%\submitted{Nov. 17, 2018}
%\received{March 16, 2000}
%\revised{March 27, 2019}
%\accepted{July 22, 2000}
%
%
%
%---------------------------------------------------------------------------
%Insert here the title, affiliations and abstract:
\setcounter{page}{1}
\title[square function and ergodic theory]
{A noncommutative weak type $(1,1)$ estimate for a square function from ergodic theory}

%----------Author 1
\author[G. Hong]{Guixiang Hong}
\address{
School of Mathematics and Statistics\\
Wuhan University\\
Wuhan 430072\\
China}

\email{guixiang.hong@whu.edu.cn}
%----------Author 2
\author[B. Xu]{Bang Xu$^{*}$}
\address{
School of Mathematics and Statistics\\
Wuhan University\\
Wuhan 430072\\
China}

\email{bangxu@whu.edu.cn}

\thanks{This work
 was partially supported by Natural Science Foundation of China (Grant:  11601396)}

\subjclass{Primary 42B20, 42B25; Secondary 46L51, 46L52, 46L53}
\keywords{Calder{\'o}n-Zygmund decomposition,  Noncommutative $L_{p}$-space, Pseudo-localisation, Noncommutative martingales, Almost orthogonality principle}

\date{May 20, 2020.
\newline \indent $^{*}$Corresponding author}
%----------additions
%\dedicatory{To my boss}
%%%
\begin{abstract}
In this paper, we investigate the boundedness of a square function operator from ergodic theory acting on noncommutative $L_{p}$-spaces. The main result is a weak type $(1,1)$ estimate of this operator. We also show the $(L_{\infty},\mathrm{BMO})$ estimate, and thus all the strong type $(L_{p},L_{p})$ estimates by interpolation. The main new difficulty lies in the fact that the kernel of this square function operator does not enjoy any regularity, while the Lipschitz  regularity assumption is crucial in showing such endpoint estimates for the noncommutative Calder\'on-Zygmund singular integrals.
\end{abstract}

%%% ----------------------------------------------------------------------
\maketitle
%%% ----------------------------------------------------------------------
%\tableofcontents

\section{Introduction}
Inspired by quantum mechanics and probability, noncommutative harmonic analysis has become an independent field of mathematical research. By using new functional analytic methods from operator space theory and quantum probability, various problems in noncommutative harmonic analysis have been investigated (see, for instance, \cite{Ha,JMX,JPX,JX1,JX,PX1,Vo,JP1,MP}). Especially, Parcet et al developed a remarkable operator-valued Calder{\'o}n-Zygmund theory. More precisely, Parcet \cite{JP1} formulated a noncommutative version of Calder{\'o}n-Zygmund decomposition using the theory of noncommutative martingales and developed a pseudo-localisation principle for singular integrals which is new even in classical theory (see \cite{TH1} for more results on this principle). As a result, Parcet obtained the weak type $(1,1)$ estimates of  Calder{\'o}n-Zygmund operators acting on operator-valued functions. This result played an important role in the perturbation theory \cite{CPSZ}, where the weak type $(1,1)$ estimates were exploited to solve the Nazarov-Peller conjecture.

Later on, Mei and Parcet \cite{MP} proved a weak type $(1,1)$ estimate for a
large class of noncommutative square functions, see \cite{HLMP} for more related results. However, it seems that Mei and Parcet's weak type estimate could not be used to get $(L_{p},L_{p})$ estimate (for $1<p<2$) by interpolation, since the decomposition does not linearly depend on the original functions. This drawback could be revised through operator-valued Calder{\'o}n-Zygmund theory---Proposition 4.3 in \cite{C} where the author proposed a simplified version of Parcet's arguments \cite{JP1}, together with noncommutative Khintchine's inequality as considered in \cite{Pis09, PiRi17}. Moreover, using Khintchine's inequality for weak $L_1$ space considered in \cite{C1}, one gets another kind of weak type $(1,1)$ inequality for the Calder{\'o}n-Zygmund operators with Hilbert valued kernels acting on operator valued functions.

Note that the arguments in \cite{JP1,MP, C} depend heavily on the Lipschitz's regularity of the underlying kernel. In this paper, motivated by the study of noncommutative maximal inequality, we will establish a weak type $(1,1)$ estimate for a square function from ergodic theory. And this square function is different from the class of Calder{\'o}n-Zygmund operators considered in the previous papers \cite{JP1,MP,C} since the associated kernel does not enjoy any regularity.

\vskip 2mm

To illustrate our motivation and present the main results, we need to set up some notions and notations, and refer the reader to Section 2 for more detailed information. Let $\M$ be a von Neumann algebra equipped with a normal semi-finite
faithful (\emph{n.s.f.}) trace $\tau$ and $\mathcal N=L_{\infty}(\R^{d})\overline{\otimes}\M$ be the von Neumann algebra tensor product with the tensor trace $\varphi=\int_{\mathbb R^d}\otimes \tau$.
Let $f:  \R^d\rightarrow S_{\M}$ be locally integrable, where $S_{\M}$ is the subset of $\M$ with $\tau$-finite support. For $t\in\R$, denote $B_t$ to be the open ball centered at the origin $0$ with radius equal to $2^{-t}$. Then we define the averaging operator on $\R^d$ as
$$M_t f(x)= \frac{1}{|B_t|} \int_{B_t}f(x+y)dy= \frac{1}{|B_t|} \int_{\R^d}f(y) \1_{B_t}(x-y)~dy, \quad x\in\R^d.$$
Given $k\in\Z$, $\mathsf{E}_{k}$ denote the $k$-th conditional expectation associated to the sigma algebra generated by the standard dyadic cubes with side-length equal to $2^{-k}$. The sequence of operators that we are going to investigate in the present paper is defined as follows:
\begin{align}\label{1}
T_{k}f(x)=(M_{k}-\mathsf{E}_{k})f(x).
\end{align}
In the scalar-valued case, that is, replacing $\M$ by the set of complex numbers $\C$, the square function
\begin{align}\label{L func}Lf(x)=\big(\sum_{k}|(M_{k}-\mathsf{E}_{k})f(x)|^{2}\big)^{\frac{1}{2}}
\end{align}
 plays an important role in deducing variational inequalities for ergodic averages or averaging operators from the ones for martingales.

 The variational inequalities are much stronger than  the maximal inequalities and imply pointwise convergence immediately without knowing a priori pointwise convergence on a dense subclass of functions, which are absent in some models of dynamical systems. Let us recall briefly the history of the development of the variational inequalities.
This line of research started with L\'epingle's work \cite{Lep76} on martingales which improved the classical Doob maximal inequality. The first variational inequality for the ergodic averages of a dynamical system proved by Bourgain \cite{Bou89} has opened up a new research direction in ergodic theory and harmonic analysis.  Bourgain's work has been extended to many other kinds of operators in ergodic theory and harmonic analysis. For instance, Campbell et al \cite{CJRW00,CJRW03} first proved the variational inequalities associated with singular integrals. The reader is referred to  \cite{JKRW98,JRW03,LeXu2,JSW08,DMT12,Mas,MaTo12,MaTo,JoWa04,OSTTW12,HM1} and references therein for more information on the development of ergodic theory and harmonic analysis in this direction of research.

The square function \eqref{L func} appeared in most of the above references on variational inequalities, and play an important role. In the present paper, similarly, using the noncommutative square function estimates, we provide another proof of the noncommutative Hardy-Littlewood maximal inequalities (or ergodic maximal inequalities) combined with the noncommutative Doob's maximal inequalities, see Corollary \ref{cor2}.

The statement of our result requires the so-called column and row function spaces \cite{P2}. We refer the reader to Section 2 for definitions of noncommutative $L_p$ spaces and weak $L_1$ space---$L_{1,\infty}$. Let $1\leq p\leq\infty$, and $(f_{k})$ be a finite sequence in $L_{p}(\mathcal N)$. Define
$$\|(f_{k})\|_{L_{p}(\mathcal N; \ell_{2}^{r})}=\|(\sum_{k}|f^{\ast}_{k}|^{2})^{\frac{1}{2}}\|_{p},\ \|(f_{k})\|_{L_{p}(\mathcal N; \ell_{2}^{c})}=\|(\sum_{k}|f_{k}|^{2})^{\frac{1}{2}}\|_{p}.$$
Then define ${L_{p}(\mathcal N; \ell_{2}^{r})}$ (resp. ${L_{p}(\mathcal N; \ell_{2}^{c})}$) to be the completion of all finite sequences in $L_p(\mathcal N)$ with respect to $\|\cdot\|_{{L_{p}(\mathcal N; \ell_{2}^{r})}}$ (resp. $\|\cdot\|_{{L_{p}(\mathcal N; \ell_{2}^{c})}}$). The space $L_p(\mathcal{N};
\ell_{2}^{rc})$ is defined as follows.
\begin{itemize}
\item If $2\leq p\leq\infty$,
$$L_p(\mathcal{N};
\ell_{2}^{rc})=L_{p}(\mathcal N; \ell_{2}^{c})\cap L_{p}(\mathcal N; \ell_{2}^{r})$$
equipped with the intersection norm:
$$\|(f_{k})\|_{L_p(\mathcal{N};
\ell_{2}^{rc})}=\max\{\|(f_{k})\|_{L_{p}(\mathcal N; \ell_{2}^{c})},
\|(f_{k})\|_{L_{p}(\mathcal N; \ell_{2}^{r})}\}.$$
\item If $1\leq p<2$,
$$L_p(\mathcal{N};
\ell_{2}^{rc})=L_{p}(\mathcal N; \ell_{2}^{c})+ L_{p}(\mathcal N; \ell_{2}^{r})$$
equipped with the sum norm:
$$\|(f_{k})\|_{L_p(\mathcal{N};
\ell_{2}^{rc})}=\inf\{\|(g_{k})\|_{L_{p}(\mathcal N; \ell_{2}^{c})}+
\|(h_{k})\|_{L_{p}(\mathcal N; \ell_{2}^{r})}\},$$
where the infimun runs over all decompositions $f_{k}=g_{k}+h_{k}$ with $g_{k}$ and $h_{k}$ in $L_{p}(\mathcal{N})$.
\end{itemize}
It is obvious that
$L_2(\mathcal{N}; \ell_{2}^{r}) = L_2(\mathcal{N};
\ell_{2}^{c})=L_2(\mathcal{N};
\ell_{2}^{rc})$.
This procedure is also used to define the spaces
$L_{1,\infty}(\mathcal{N}; \ell_{2}^{r})$ (resp.
$L_{1,\infty} (\mathcal{N}; \ell_{2}^{c})$) and  $L_{1,\infty} (\mathcal{N}; \ell_{2}^{rc})$ with the sum norm,
$$\|(f_{k})\|_{L_{1,\infty}(\mathcal{N};
\ell_{2}^{rc})}=\inf_{f_{k}=g_{k}+h_{k}}\big\{\|(g_{k})\|_{L_{1,\infty}(\mathcal{N}; \ell_{2}^{c})}+\|(h_{k})\|_{L_{1,\infty}(\mathcal{N}; \ell_{2}^{r})}\big\}.$$

We also recall the definitions of $\mathrm{BMO}$ spaces associated
to the von Neumann algebra tensor product $\mathcal A=\mathcal N
\bar\otimes \mathcal{B}(\ell_{2})$ with the tensor trace $\psi=\varphi\otimes tr$ where $tr$ is the canonical trace on $\mathcal{B}(\ell_{2})$. Let $L_0(\mathcal A)$ stand for the $\ast$-algebra of $\psi$-measurable operators affiliated with $\mathcal A$.
According to \cite{MP}, we define the dyadic $\mathrm{BMO}$ space $\mathrm{BMO}_{d}(\mathcal A)$ as the subspace of $L_0(\mathcal A)$ such that
$$\|f\|_{\mathrm{BMO}_{d}(\mathcal A)} \, = \, \max \Big\{
\|f\|_{\mathrm{BMO}_{\! d}^r(\mathcal A)}, \|f\|_{\mathrm{BMO}_{\! d}^c(\mathcal A)}
\Big\} \, < \, \infty,$$ where the row and column dyadic $\mathrm{BMO}_{d}$
norms are given by
\begin{eqnarray*}
\|f\|_{\mathrm{BMO}_{\! d}^r(\mathcal A)} & = & \sup_{Q \in\Q} \Big\| \Big(
\frac{1}{|Q|} \int_Q \Big|\big(f(x) -\frac1{|Q|}\int_Q f(y)dy\big)^{\ast}\Big|^2 \, dx \Big)^{\frac12} \Big\|_{\M\bar\otimes \mathcal{B}(\ell_{2})}, \\
\|f\|_{\mathrm{BMO}_{\! d}^c(\mathcal A)} & = & \sup_{Q \in\Q} \Big\| \Big(
\frac{1}{|Q|} \int_Q \Big|f(x) -\frac1{|Q|}\int_Q f(y)dy\Big|^2 dx \Big)^{\frac12} \Big\|_{\M\bar\otimes \mathcal{B}(\ell_{2})}.
\end{eqnarray*}
We refer the reader to \cite{M} for more precise definitions and relative properties of $\mathrm{BMO}_{d}(\mathcal A)$.

Let $T_{k}$ ($k\in\mathbb Z$) be defined as in (\ref{1}).
The following is our main result.
\begin{thm}\label{t5}
Let $1\leq p\leq \infty$. Then the following assertions are true with a positive constant $C_{p,d}$ depending only on $p$ and the dimension
$d$:
\begin{itemize}
\item[(i)] for $p=1$, $$\|(T_{k}f)\|_{L_{1,\infty}(\mathcal{N},
\ell_{2}^{rc})}\leq C_{p,d}\|f\|_{1},\; \forall f\in L_{1}(\mathcal N);$$

\item[(ii)] for $p=\infty$, $$\Big\|
\sum_{k} T_k \hskip-1pt f \otimes e_{1k}
\Big\|_{\mathrm{BMO}_{d}(\mathcal{A})} +
\Big\|
\sum_{k} T_k \hskip-1pt f \otimes e_{k1}
\Big\|_{\mathrm{BMO}_{d}(\mathcal{A})} \leq C_{p,d} \,
\|f\|_\infty,\; \forall f\in L_{\infty}(\mathcal N);$$

\item[(iii)] for $1<p<\infty$, $$\| (T_{k}f)\|_{L_p(\mathcal{N};
\ell_{2}^{rc})}\leq C_{p,d} \|f\|_{p}, \; \forall f\in L_{p}(\mathcal N).$$
\end{itemize}
\end{thm}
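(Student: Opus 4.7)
My plan is to adapt the noncommutative Calder\'on--Zygmund framework of Parcet~\cite{JP1} and Cadilhac~\cite{C}, substituting an almost-orthogonality argument tailored to the specific structure $T_k = M_k - \mathsf{E}_k$ for the kernel Lipschitz estimate that those frameworks usually require. Both endpoints ultimately rest on the $L_2$ square-function bound $\|(T_k f)\|_{L_2(\mathcal{N};\ell_2)} \lesssim \|f\|_2$; since $L_2(\mathcal{N};\ell_2^{rc}) = L_2(\mathcal{N};\ell_2)$ and each $T_k$ acts only on the spatial variable, this reduces via Fubini to the scalar Littlewood--Paley inequality for $\sum_k|M_k f - \mathsf{E}_k f|^2$, which one can prove either by Fourier analysis or by a Cotlar--Stein argument exploiting that $\mathsf{E}_k(M_\ell-\mathsf{E}_\ell)$ has operator norm decaying in $|k-\ell|$.

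For (i), fix $\lambda>0$ and apply Parcet's noncommutative Calder\'on--Zygmund decomposition at level $\lambda$ to write $f = g + b$ with bad part $b = \sum_Q b_Q$, each $b_Q$ supported on a selected dyadic cube $Q$ and satisfying $\int_Q b_Q = 0$. The good part is controlled by Chebyshev's inequality together with the $L_2$ bound. The bad part is the heart of the matter: I would split $T_k b_Q$ according to whether the averaging scale $2^{-k}$ exceeds the sidelength $2^{-j(Q)}$ of $Q$. When $k \le j(Q)$ the mean-zero property of $b_Q$ on the $2^{-k}$-cube containing $Q$ forces $\mathsf{E}_k b_Q = 0$, so $T_k b_Q = M_k b_Q$ is supported in a $2^{-k}$-neighborhood of $Q$ and can be handled by size-and-support estimates summable in $k$. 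When $k > j(Q)$ both $M_k b_Q$ and $\mathsf{E}_k b_Q$ essentially live inside $Q$ and the classical kernel-smoothness cancellation is unavailable; here I would extract decay in $k - j(Q)$ by comparing $T_k$ with the dyadic filtration through an almost-orthogonality estimate, then reassemble the pieces in $L_{1,\infty}(\mathcal{N};\ell_2^{rc})$ using its sum decomposition, following the template of~\cite{MP,C}.

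For (ii), I would estimate the $\mathrm{BMO}_d$ norm cube by cube: given a dyadic cube $Q$, split $f = f\mathbf{1}_{Q^\ast} + f\mathbf{1}_{(Q^\ast)^c}$ with $Q^\ast$ a mild dilate, handle the local piece via the $L_2$ bound lifted to $\mathcal{N}\bar\otimes \mathcal{B}(\ell_2)$, and control the far piece by subtracting an appropriate constant and invoking the same almost-orthogonality substitute for kernel regularity; the row and column versions $\sum_k T_kf\otimes e_{1k}$ and $\sum_k T_kf\otimes e_{k1}$ are handled symmetrically. With (i) and (ii) in hand, (iii) follows by noncommutative Marcinkiewicz interpolation between the two endpoints, using the known identification of $L_p(\mathcal{N};\ell_2^{rc})$ with the appropriate complex interpolation scale.

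The main obstacle is precisely the regime $k > j(Q)$ in (i), and its far-field analogue in (ii): without pointwise kernel regularity one loses the standard $\int[K_k(x,y)-K_k(x,y_Q)]b_Q(y)\,dy$ cancellation trick, so the central new ingredient must be an almost-orthogonality principle yielding summable decay in $|k-j(Q)|$ that is compatible with the sum/intersection decompositions defining the target spaces $L_{1,\infty}(\mathcal{N};\ell_2^{rc})$ and $\mathrm{BMO}_d(\mathcal{A})$.
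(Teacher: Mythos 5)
Your overall architecture — Calder\'on--Zygmund decomposition, an $L_2$ square-function bound proved by almost orthogonality, a local/far split for BMO, and interpolation — matches the paper's skeleton, but there are two substantive gaps and one misidentified mechanism.

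First, you never introduce the linearization step. The paper does not estimate the square function $(T_k f)_k$ directly in $L_{1,\infty}(\mathcal{N};\ell_2^{rc})$; it first invokes Cadilhac's noncommutative Khintchine inequality in $L_{1,\infty}$ (Lemma~\ref{C1}) to replace the square function by the single \emph{linear} operator $Tf=\sum_k\varepsilon_k(M_k-\mathsf{E}_k)f$ acting into $L_{1,\infty}(L_\infty(\Omega)\overline{\otimes}\mathcal{N})$. This is essential on two counts: it makes the row/column sum decomposition of the target space tractable, and it is precisely what rescues the interpolation step for $1<p<2$. As the introduction to the paper explains, the weak $(1,1)$ bound for the square function itself cannot be interpolated because the $g_k+h_k$ decomposition is not linear in $f$; one must interpolate the linear operator $T$ between $L_1\to L_{1,\infty}$ and $L_2\to L_2$, and only afterwards convert back via Khintchine. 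Your closing sentence about ``noncommutative Marcinkiewicz interpolation between the two endpoints'' therefore cannot be carried out as stated; the paper splits $1<p<2$ (real interpolation of $T$) from $2<p<\infty$ (complex interpolation of the column and row maps $\sum_k T_k\otimes e_{k1}$, $\sum_k T_k\otimes e_{1k}$ between $L_2$ and BMO).

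Second, and more structurally, the mechanism you propose for the fine-scale regime $k>j(Q)$ in part (i) is not what actually works, and it is not clear it could. You propose to ``extract decay in $k-j(Q)$ by \ldots an almost-orthogonality estimate,'' i.e.\ something of the form $\|T_k b_Q\|_2\lesssim 2^{-c(k-j(Q))}\|b_Q\|_2$. But the bad pieces are only $L_1$-normalized by the Calder\'on--Zygmund decomposition; $\|b_Q\|_2$ is uncontrolled, so no amount of geometric decay in $k-j(Q)$ can close the estimate. The paper's actual ingredient here is the pseudo-localization \emph{projection} $\zeta$ of Lemma~\ref{L3}: one shows $\zeta(M_k-\mathsf{E}_k)b_n\zeta=0$ \emph{identically} for $k\ge n$, because $x+B_k\subset 5Q_{x,n}$ and $Q_{x,k}\subset Q_{x,n}$ and $\zeta(x)$ annihilates $b_n$ on $5Q_{x,n}$. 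The decay in scale separation (via the operators $M_{k,n}$ and Lemma~\ref{kn}) is only needed in the opposite, coarse regime $k<n$, where in addition one writes $b_n$ as a difference of positives and uses Cuculescu's $p_nf_np_n\le\lambda p_n$ to land on $\lambda\|p_n\|_2$ rather than $\|b_n\|_2$. Your proposal also omits the pseudo-localization theorem (Theorem~\ref{t2}) needed to control the good off-diagonal part $g_{\mathit{off}}$, which is the place where the almost-orthogonality principle is genuinely combined with the same $\zeta$-type exact-vanishing idea. Finally, for the BMO far-field estimate the paper does not use almost orthogonality either: it proves the direct geometric bound $\|M_kf_2(x)-M_kf_2(c_Q)\|\lesssim 2^{kd}|(B_k+x)\triangle(B_k+c_Q)|\,\|f\|_\infty\lesssim 2^k\ell(Q)\|f\|_\infty$, a pointwise symmetric-difference estimate that replaces the missing kernel smoothness at coarse scales.
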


\begin{remark}\label{app}
\emph{The three estimates in Theorem \ref{t5} for infinite $T_k$'s or summations  over $k\in\mathbb Z$ should be understood as the consequences of the corresponding uniform estimates for all finite subsequences of operators $T_k$'s and some standard approximation arguments (see for instance Section 6.A of \cite{JMX}). For this reason, as in \cite{MP} we will not explain the convergence of infinite sums appearing in the whole paper when there is no ambiguity.}
\end{remark}

The assertion (iii) could be regarded as a result in vector-valued harmonic analysis with the underlying Banach spaces being noncommutative $L_p$ spaces, but seems new even in that setting since the kernel of $T_k$ does not enjoy regularity; while some regularity assumption is required in the theory of vector-valued Calder\'on-Zygmund singular integrals, see e.g. the book \cite{HNVW} and the references therein.

If we set $R_{k}f=(M_{k}-M_{k-1})f$ and $Rf=(R_{k}f)$, then together with the noncommutative Burkholder-Gundy inequality \cite{Ran, Ran05}, Theorem \ref{t5} finds its first application:
\begin{cor}\label{cor1}
For $f\in L_{1}(\mathcal N)$, we have
    $$\|(R_{k}f)\|_{L_{1,\infty}(\mathcal{N};
\ell_{2}^{rc})}\leq C_d\|f\|_{1},$$
where the positive constant $C_d$ depends only on the dimension
$d$.
\end{cor}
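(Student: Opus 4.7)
The plan is to reduce the bound on $R_k f = (M_k - M_{k-1})f$ to two ingredients: Theorem \ref{t5}(i), and the noncommutative weak type $(1,1)$ Burkholder--Gundy inequality of Randrianantoanina \cite{Ran, Ran05}. The idea is that although $R_k$ itself has no obvious regularity or smoothing structure that we can exploit directly, it can be written as a telescoping combination of $T_k$'s plus a martingale difference, both of which have already been controlled.

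The first step is the algebraic decomposition
\begin{align*}
R_k f \; = \; (M_k f - \mathsf{E}_k f) - (M_{k-1} f - \mathsf{E}_{k-1} f) + (\mathsf{E}_k f - \mathsf{E}_{k-1} f) \; = \; T_k f - T_{k-1} f + d_k f,
\end{align*}
where $d_k f := \mathsf{E}_k f - \mathsf{E}_{k-1} f$ is the $k$-th dyadic martingale difference of $f$. For the first two summands, a shift of the index does not change the norm in $L_{1,\infty}(\mathcal N;\ell_2^{rc})$, so by the quasi-triangle inequality in this quasi-Banach space and by Theorem \ref{t5}(i),
\begin{align*}
\|(T_k f - T_{k-1} f)\|_{L_{1,\infty}(\mathcal N;\ell_2^{rc})} \; \lesssim \; \|(T_k f)\|_{L_{1,\infty}(\mathcal N;\ell_2^{rc})} + \|(T_{k-1} f)\|_{L_{1,\infty}(\mathcal N;\ell_2^{rc})} \; \lesssim \; \|f\|_1.
\end{align*}

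For the martingale difference piece, Randrianantoanina's noncommutative weak type $(1,1)$ Burkholder--Gundy inequality yields directly
\begin{align*}
\|(d_k f)\|_{L_{1,\infty}(\mathcal N;\ell_2^{rc})} \; \leq \; C_d\,\|f\|_1.
\end{align*}
Combining the two bounds via one more application of the quasi-triangle inequality finishes the proof.

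There is no real obstacle here: all the analytic depth sits in Theorem \ref{t5}(i), whose proof handles the singular kernel with no regularity. Given that result, the corollary is essentially an algebraic rearrangement plus a well-known martingale inequality. The only minor care needed is that $L_{1,\infty}(\mathcal N;\ell_2^{rc})$ is only a quasi-Banach space, but this costs at most universal constants and does not interact with the structure of the problem.
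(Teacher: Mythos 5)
Your proof is correct and is essentially the same as the paper's: both split $R_k f = T_k f - T_{k-1} f + (\mathsf{E}_k - \mathsf{E}_{k-1})f$, then invoke Theorem \ref{t5}(i) for the two $T$-terms and Randrianantoanina's weak type $(1,1)$ martingale square-function inequality \cite{Ran} for the martingale difference term. The only difference is that you spell out the quasi-triangle and index-shift steps that the paper leaves implicit.
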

Moreover, together with Cuculescu's noncommutative weak type $(1,1)$ maximal estimate for martingales \cite{Cuc}, a similar endpoint estimate for the Hardy-Littlewood  maximal function, firstly established in \cite{M}, follows as a corollary of Theorem \ref{t5}.
\begin{cor}\label{cor2}
For $(f,\lambda) \in L_1(\mathcal{N}) \times \R_+$, there exists a projection
$q \in \mathcal{N}$ with $$\sup_{k\in\Z} \big\|
q M_kf q \big\|_{\infty} \leq
\lambda \qquad \mbox{and} \qquad \lambda\varphi \big( \1_\mathcal{N} -
q \big) \leq C_d\|f\|_1,$$
where the positive constant $C_d$ depends only on the dimension
$d$.
\end{cor}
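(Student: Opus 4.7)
The natural splitting is $M_k f = T_k f + \mathsf{E}_k f$, so the plan is to construct one projection controlling the martingale maximal function $\sup_k \mathsf{E}_k f$ via Cuculescu's inequality, another controlling $\sup_k T_k f$ via the square-function estimate of Theorem~\ref{t5}(i), and then intersect them.

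Fix $\lambda>0$. Applying Cuculescu's theorem to the noncommutative martingale $(\mathsf{E}_k f)_{k\in\Z}$ gives a projection $q_0\in\mathcal{N}$ with
$$\lambda\,\varphi(\1_{\mathcal N} - q_0)\,\lesssim\,\|f\|_1\qquad\text{and}\qquad \sup_{k\in\Z}\,\|q_0\,\mathsf{E}_k f\,q_0\|_\infty\,\leq\,\lambda.$$
For the $T_k f$ part, Theorem~\ref{t5}(i) provides a decomposition $T_k f=g_k+h_k$ with
$$\|(g_k)\|_{L_{1,\infty}(\mathcal N;\ell_2^c)}+\|(h_k)\|_{L_{1,\infty}(\mathcal N;\ell_2^r)}\,\leq\,2C_d\|f\|_1.$$
Taking the spectral projection $e_c=\chi_{[0,\lambda]}\bigl((\sum_k|g_k|^2)^{1/2}\bigr)$, the definition of $L_{1,\infty}$ gives $\lambda\,\varphi(\1_{\mathcal N}-e_c)\lesssim\|f\|_1$ together with $\bigl\|(\sum_k|g_k|^2)^{1/2}e_c\bigr\|_\infty\leq\lambda$; since $|g_k|^2\leq\sum_j|g_j|^2$ for each $k$, this upgrades to $\|g_k e_c\|_\infty\leq\lambda$ for every $k$. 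Symmetrically, $e_r=\chi_{[0,\lambda]}\bigl((\sum_k|h_k^*|^2)^{1/2}\bigr)$ satisfies $\lambda\,\varphi(\1_{\mathcal N}-e_r)\lesssim\|f\|_1$ and $\|e_r h_k\|_\infty\leq\lambda$ for every $k$.

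Set $q=q_0\wedge e_c\wedge e_r$. A union-type bound on the orthogonal complements yields $\lambda\,\varphi(\1_{\mathcal N}-q)\lesssim\|f\|_1$. Since $q\leq e_c$ implies $g_k q=g_k e_c q$ (and likewise $q h_k=q e_r h_k$), one obtains $\|g_k q\|_\infty\leq\lambda$ and $\|q h_k\|_\infty\leq\lambda$, whence
$$\|q\,M_k f\,q\|_\infty\,\leq\,\|q\,\mathsf{E}_k f\,q\|_\infty+\|g_k q\|_\infty+\|q h_k\|_\infty\,\leq\,3\lambda\qquad \text{uniformly in }k\in\Z,$$
and a final rescaling $\lambda\mapsto\lambda/3$ delivers the statement (after absorbing a constant into $C_d$). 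The only delicate step is the passage from the $L_{1,\infty}(\mathcal N;\ell_2^{rc})$ square-function bound to the pointwise-in-$k$ operator-norm bounds for $g_k e_c$ and $e_r h_k$; this is handled by the spectral-projection trick indicated above, and presents no serious obstacle once Theorem~\ref{t5}(i) is in hand.
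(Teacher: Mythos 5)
Your proof is correct and follows essentially the same route as the paper: split $M_k f=\mathsf{E}_k f+T_k f$, apply Cuculescu to the martingale part, decompose $T_k f=g_k+h_k$ via Theorem~\ref{t5}(i), take spectral projections of the two square functions, and intersect all three projections. The only cosmetic difference is that you pass from the square-function bound to $\|g_k e_c\|_\infty\leq\lambda$ and $\|e_r h_k\|_\infty\leq\lambda$ directly via $|g_k|^2\leq\sum_j|g_j|^2$, whereas the paper writes out an explicit polar decomposition to obtain the same termwise bounds on $e_4 g_k e_4$ and $e_4 h_k e_4$.
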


\begin{remark}
\emph{(i) The $L_p$-versions of the two corollaries ($1<p<\infty$) also hold true if we appeal to the noncommutative Burkholder-Gundy inequalities \cite{PX1} and Doob maximal inequalities \cite{J1}. Moreover, as Theorem \ref{t5}~(iii), the $L_p$-versions of Corollary \ref{cor1} seem new even in the framework of vector-valued harmonic analysis.}

\emph{(ii) Replacing the domain $\mathbb R^d$ by $\mathbb Z^d$ in Theorem \ref{t5}, Corollary \ref{cor1} and \ref{cor2}, similar results hold also true (see e.g. \cite{HM1,JRW03} and the references therein). Then by the noncommutative Calder\'on transference principle \cite{Hon}, we provide another proof of ergodic maximal inequalities associated with actions of groups $\mathbb R^d$ and $\mathbb Z^d$ (see \cite{HLW} for more results).}

\emph{(iii) Note that in \cite{M}, the author established the result in Corollary \ref{cor2} by appealing to $d+1$ noncommutative martingales, while our method involves only one martingale which certainly have further application.}
\end{remark}
Let us briefly analyze the proof of Theorem \ref{t5}. The result for $p=2$ follows trivially from the corresponding commutative result, but we prefer to provide a noncommmutative proof in the Appendix for warming up. For $1\leq p<2$, using the noncommutative Khintchine inequalities in $L_{1,\infty}$ space \cite{C1} and in $L_{p}$ space \cite{LG}, we are reduced to showing the weak type $(1,1)$ and strong type $(p,p)$ estimates of the following operator
\begin{align}\label{finite}
{T}f(x)=\sum_{k}\varepsilon_{k}(M_{k}-\mathsf{E}_k)f(x),
\end{align}
where $(\varepsilon_{k})$ is a Rademacher sequence on a probability space $(\Omega,P)$. Here in the definition of ${T}$ the summation is actually taken over an arbitrarily fixed subsequences of $\mathbb Z$ in terms of Remark \ref{app}. Note that the linearity of the operator ${T}$ allows us to deduce the result for intermediate $p$'s from
the weak type $(1,1)$ and strong type $(2,2)$ estimates by real interpolation. On the other hand, the results for $2<p<\infty$ follow by complex interpolation from $(L_\infty,\mathrm{BMO})$ and strong type $(2,2)$ estimates. But this time the linear operators are $\sum\limits_{k} T_k \otimes e_{1k}$ and $\sum\limits_{k} T_k \otimes e_{k1}$.  Here again the summation is actually taken over an arbitrarily fixed subsequence of $\mathbb Z$. Thus we are reduced to establishing the two endpoint estimates for $p=1,\infty$.

However, with a moment's thought, there are many difficulties to adapt the arguments in \cite{JP1,MP,C} to our setting. Indeed, it is obvious that the kernel associated with ${T}$ (or $T_k$) does not enjoy Lipschitz's regularity while the methods in \cite{JP1,MP,C} depend heavily on this smoothness condition. This prompted us to look for some new methods. It turns out that the main ingredient in showing the strong type $(2,2)$ estimate---an almost orthogonality principle plays an important role in overcoming these difficulties. But numerous modifications are necessary in establishing the noncommutative endpoint estimates.

%Fortunately, thanks to almost orthogonal principle, we solve the difficulties. Our main tools to get the corresponding weak type estimate result are
%as follows. Firstly, we would use almost orthogonality principle combining with noncommutative Khintchine's inequalities. Secondly, we would
%develop a \emph{pseudo-localization principle} for this square function which is of independent interest, even in the classical
%theory. Thirdly, the weak type estimate of Theorem \ref{t5} requires \emph{noncommutative
%form of Calder{\'o}n-Zygmund decomposition} which was established by Parcet \cite{JP1}. We would like to emphasize that the approach developed here seems new even in scalar setting, and Theorem \ref{t5} may be regarded as our first step toward a noncommutative vector-valued theory.

We end our introduction with a brief description of the organization of the paper. In Section 2, we present some preliminaries on noncommutative $L_{p}$-spaces and introduce some notations. A large portion of Section 3 is devoted to the proof of  conclusion (i) of Theorem \ref{t5} while Corollary \ref{cor1} and Corollary \ref{cor2} will be proved at the end of this section. The $(L_{\infty},\mathrm{BMO})$ estimate is proved in Section 4. In Section 5, we give the proof of conclusion (iii) of Theorem \ref{t5}.
%Finally, we prove $\widetilde{T}$ is bounded from $L_{2}(\mathcal{N})$ to $L_2(L_{\infty}(\Omega)\overline{\otimes}\mathcal{N})$ in Appendix A.

\bigskip

%NOTE: Ten months after the first version of the present paper was completed and submitted, the authors learned from E. Ricard and L. Cadilhac that they found a new noncommutative Calder\'on-Zygmund decomposition which could simplify Parcet's original weak $(1,1)$ estimate considerably. But it seems unclear at all whether this new decomposition might be able to work efficiently in our case due to the lack of regularity.

%%%%%%%%%%%%%%%%%%%%%%%%%%%%%%%%%%%%%%%%%%%
\section{Preliminaries}
This section collects all the necessary preliminaries for the whole paper. The reader is referred to \cite{P2} for more information on noncommutative $L_p$-spaces and noncommutative martingales.
\subsection{Noncommutative $L_p$ spaces}
Let $\M$ be a von Neumann algebra equipped with a \emph{n.s.f.} trace $\tau$. Denote by ${{\mathcal S_\M}}_+$ the set of all $x\in \M_+$ such that $\tau(\mathrm{supp}~x)<\infty$, where $\mathrm{supp} ~x$ denotes the support of $x$ which is the smallest
projection $e$ such that $exe=x$. Let ${\mathcal S_\M}$ be the linear span of ${\mathcal S_\M}_+$. Then ${\mathcal S_\M}$ is
a w*-dense $*$-subalgebra of $\M$. Given $1\leq  p < \infty$, we define
$$\|x\|_p=[\tau(|x|^p)]^{1/p}, \qquad x\in {\mathcal S_\M},$$
where $|x| = (x^*x)^{1/2}$ is the modulus of $x$. Then $({\mathcal S_\M}, \|\cdot\|_p)$ is a normed  space, whose completion is the noncommutative $L_p$-space associated with $(\M, \tau)$, denoted by $L_p(\M, \tau)$ or simply by $L_p(\M)$. For convenience, we set $L_\infty(\M, \tau) =\M$ equipped with the operator norm. Like the classical
$L_p$-spaces, the noncommutative $L_{p}$-spaces behave well with respect to duality and interpolation.
%\begin{itemize}
%\item Duality: For $1\leq p<\infty$, the dual space of $L_{p}(\M)$ is $L_{p^{\prime}}(\M)$ with respect to the following duality
   % $$(x,y)=\tau(xy),\ \ x\in L_{p}(\M),\ y\in L_{p^{\prime}}(\M),$$
  %where $p^\prime$ is the conjugate index of $p$ such that $\frac{1}{p}+ \frac{1}{p^{\prime}}=1$.
%\item H\"older's inequality: If $\frac{1}{r} = \frac{1}{p}+\frac{1}{q}$, then $\|xy\|_{L_{r}(\M)}
%\le \|x\|_{L_{p}(\M)} \|y\|_{L_{q}(\M)}$.

%\item Complex interpolation: For $1\le p_0<p_1\leq \infty$ and $0<\eta<1$, we have
 %$$\big( L_{p_0}(\M),\, L_{p_1}(\M) \big)_{\eta}= L_{p}(\M)\;\text{ with equal norms},$$
%where $\frac{1}{p}=\frac{1-\eta}{p_0}+\frac{\eta}{p_1}$.
%\end{itemize}

The noncommutative weak $L_1$-space
$L_{1,\infty}(\mathcal{M})$ is defined as the subspace of all $\tau$-measurable operators affiliated with $\mathcal M$ equipped with finite quasi-norm
$$\|x\|_{1,\infty} =\sup_{\lambda > 0}\lambda\tau (|x|> \lambda ):=\sup_{\lambda > 0}\lambda\tau ( \chi_{(\lambda,\infty)}
(|x|) ).$$
It was already shown in \cite[Lemma 2.1]{JRWZ} that for $x_1, x_2 \in
L_{1,\infty}(\M)$ and $\lambda\in\R_{+}$ $$\lambda \, \tau ( |x_1+x_2| > \lambda
)\leq \lambda \, \tau ( |x_1| > \lambda/2 ) +
\lambda \, \tau (|x_2| > \lambda/2 ).$$

\subsection{Noncommutative martingales}
Consider a von Neumann subalgebra $\M_k$ of $\mathcal{M}$ such that $\tau|_{\M_k}$ is semi-finite. Then there exists a unique map $\mathcal{E}_{k}: \mathcal{M} \to
\M_k$ satisfying the following properties:
\begin{itemize}
\item $\mathcal{E}_{k}$ is a normal positive
contractive projection from $\mathcal{M}$ onto $\M_k$;
\item bimodule property,
\[ \mathcal{E}_{k}(x_1 x \hskip1pt x_2) = x_1 \mathcal{E}_{k}(x) \hskip1pt
x_2 \quad \mbox{for all} \quad x_1, x_2 \in \M_k \
\mbox{and} \ x \in \mathcal{M};\]
\item trace preserving: $\tau \circ
\mathcal{E}_{k} = \tau$.
\end{itemize}
The map $\mathcal{E}_{k}$ is called the \emph{conditional expectation} from $\M$  onto $\M_k$. It follows that $\mathcal{E}_k^{\prime}s$ satisfy
$$\forall\ k,j\geq1,\ \mathcal{E}_k\mathcal{E}_j=\mathcal{E}_j\mathcal{E}_k=\mathcal{E}_{\min(k,j)}.$$
Note that for
every $1 \le p < \infty$ and $k \ge 1$, $\mathcal{E}_k$ extends to
a positive contraction $\mathcal{E}_k: L_p(\mathcal{M}) \to
L_p(\mathcal{M}_k)$. We call a \emph{filtration} of $\M$ a sequence of increasing von Neumann subalgebras  $(\M_k)_{k \ge 1}$ such that $\cup_{k}\M_k$ is weak$^\ast$ dense in $\mathcal{M}$ and $\tau|_{\M_{k}}$ is semi-finite for every $k\ge 1$.
Let $1\leq p\leq\infty$. A sequence $x = (x_k)_{k \ge 1}$ in $L_{p}(\M)$ is called a \emph{noncommutative $L_p$-bounded martingale} with
respect to the filtration $(\mathcal{M}_k)_{k \ge 1}$, if
\[
\mathcal{E}_j(x_k) = x_j \quad \mbox{for all} \quad 1 \le j \le k
< \infty,
\]
and $\|x\|_p = \sup\limits_{k \ge 1} \|x_k\|_p < \infty$.
Moreover, $x$ is said to be positive if $x_{k}\geq0$ for all $k\ge 1$. For every $k\ge1$, we define $dx_k = x_k - x_{k-1}$ with the convention that $x_0 = 0$.
The sequence $dx =(dx_k)_{k\ge 1}$ is called the martingale difference sequence of $x$.
\subsection{General notations}
In this subsection, we need to set up some notations that will remain fixed through the paper. Let $\M$ be a
semi-finite von Neumann algebra equipped with a \emph{n.s.f.} trace $\tau$. We consider the tensor von Neumann algebra $\mathcal N=L_{\infty}(\R^{d})\overline{\otimes}\M$ equipped with the tensor \emph{n.s.f.} trace
$\varphi$. Note that for every $1\leq p<\infty$,
$$L_{p}(\mathcal{N};\ \varphi)\cong\ L_{p}(\R^{d};L_{p}(\M)).$$
The space on the right-hand side is the space of Bochner $p$-integrable functions from $\R^{d}$ to $L_{p}(\M)$.
For $1\leq p\leq\infty$, we simply write $L_p(\mathcal{N})$ for the noncommutative
$L_p$ space associated to the pair
$(\mathcal{N},\varphi)$ and
$\|\cdot\|_{p}$ denotes the norm of $L_p(\mathcal{N})$. But if any other $L_{p}$-space appears in a same context, we will precisely mention the associated $L_{p}$-norm in order to avoid possible ambiguity. The lattices of projections are denoted by $\M_\pi$
and $\mathcal{N}_{\pi}$, while $\1_{\M}$ and $\1_{\mathcal{N}}$ stand for the unit
elements.

Denote by $\Q$ the set of all standard dyadic cubes in $\R^d$. The side length of $Q$ is denoted by $\ell(Q)$. Given an integer $k \in \Z$, $\Q_k$ will denote the set of dyadic cubes of side length $2^{-k}$. Let $|Q|=2^{-dk}$ be the volume of such a cube. If $Q\in\Q$ and $f: \R^d \to
\M$ is integrable on $Q$, we define its average over $Q$ as
$$f_Q = \frac{1}{|Q|} \int_Q f(y) \, dy.$$

For $k\in\Z$, let $\sigma_{k}$ be the $k$-th dyadic $\sigma$-algebra, i.e., $\sigma_{k}$ is generated by the dyadic cubes with side length equal to $2^{-k}$. Denote by $\mathsf{E}_k$ the
conditional expectation associated to the classical dyadic
filtration $\sigma_{k}$ on $\R^d$. We also use $\mathsf{E}_k$ for the
tensor product $\mathsf{E}_k \otimes id_\M$ acting on $\mathcal{N}$. If $1
\le p \leq\infty$ and $f \in L_p(\mathcal{N})$, we have
$$\mathsf{E}_k(f) = \sum_{Q \in \Q_k}^{\null} f_Q 1_Q.$$
Similarly, $(\mathcal{N}_k)_{k \in \Z}$ will stand for the corresponding
filtration and $\mathcal{N}_k = \mathsf{E}_k(\mathcal{N})$. For convenience, we will write $f_{k}:=\mathsf{E}_k(f)$ and $\Delta_{k} (f):=f_{k}-f_{k-1}=: df_{k}$.

For all $x\in\R^{d}$, sometimes we write $Q_{x,k}$ for the cube in $\Q_k$ containing $x$, and its center is denoted by $c_{x,k}$. For any odd positive integer $i$ and $Q$ in $\Q_{k}$, let $iQ$ be the cube with the same center as $Q$ such that $\ell(iQ)=i\ell(Q)$.
Notice that for all $x,y \in \R^{d}$ and $k\in\Z$, $x \in iQ_{y,k} \Leftrightarrow y \in iQ_{x,k}$.

Throughout the paper we use the notation
$X\lesssim Y$ for nonnegative quantities $X$ and $Y$ to mean $X\le CY$ for some inessential constant $C>0$.
Similarly, we use the notation $X\backsimeq Y$ if both $X\lesssim Y$ and $Y \lesssim X$ hold.
%%%%%%%%%%%%%%%%%%%%%%%%%%%%%%%%%%%%%%%%%%%%%%%%%%%%%%%%%%%%%%%%%
%%%%%%%%%%%%%%%%%%%%%%%%%%%%%%%%%%%%%%%%%%%%%%%%%%%%%%%%%%%%%%%%%

\section{Weak type $(1,1)$ estimates}
%%%%%%%%%%%%%%%%%%%%%%%%%%%%%%%%%%%%%%%%%%%%%%%%%%%%%%%%%%%%%%%%
%%%%%%%%%%%%%%%%%%%%%%%%%%%%%%%%%%%%%%%%%%%%%%%%%%%%%%%%%%%%%%%%
In this section, we first prove conclusion $(i)$ of Theorem \ref{t5} and Corollary \ref{cor1} as well as Corollary \ref{cor2} will be shown in the last subsection. By decomposing $f = f_{1}-f_{2} +i(f_{3}-f_{4})$ with positive $f_{j}$ and $\|f_{j}\|_{1}\leq\|f\|_{1}$ for $j=1,2,3,4$, we assume that $f$ is positive in order to avoid unnecessary computations. Let
us work on the following dense subset of $L_1(\mathcal N)_+$
$$\mathcal N_{c,+} = L_1(\mathcal N) \cap \Big\{
f: \R^d \to \M \, \big| \ f \in \mathcal N_+, \
\overrightarrow{\mathrm{supp}} \hskip1pt f \ \ \mathrm{is \
compact} \Big\} \subset L_1(\mathcal N)_+.$$ Here
$\overrightarrow{\mathrm{supp}}$ means the support of $f$ as an
operator-valued function on $\R^d$. That is to say,
$\overrightarrow{\mathrm{supp}} \hskip1pt f = \mathrm{supp}
\hskip1pt \|f\|_{L_1(\M)}$. We use this terminology to distinguish from $\mathrm{supp} \, f$, which is a projection in $\mathcal N$.
By the standard density argument and the fact that $\mathcal N_{c,+}$ is dense in $L_1(\mathcal N)_{+}$, it suffices to show the desired estimates for $f \in \mathcal{N}_{c,+}$. Moreover, it can be seen that for any $f \in \mathcal{N}_{c,+}$ and $\lambda>0$, there exists $m_{\lambda}(f)\in\Z$ such that $f_{k}\leq\lambda\1_{\mathcal{N}}$ for all $k\leq m_{\lambda}(f)$.

In the remaining part on the proof of Theorem \ref{t5}~(i),  both $f \in \mathcal{N}_{c,+}$ and $\lambda\in(0,+\infty)$ will be fixed and without loss of generality $m_{\lambda}(f)$ is assumed to be $0$.

\subsection{Calder{\'o}n-Zygmund decomposition}

Under the assumption that $m_{\lambda}(f)=0$, applying Cuculescu's construction \cite{Cuc} to the martingale $(f_{k})_{k\geq0}$ relative to the dyadic filtration $(\mathcal{N}_k)_{k \geq0}$ (see for instance \cite[Lemma 3.1]{JP1}), there exists a sequence of projections $(q_k)_{k\in\Z}$ defined by $q_k = \1_\mathcal{N}$ for $k\leq0$ and recursively for $k>0$,
$$q_k=q_k(f,\lambda)=\1_{(0,\lambda]}(q_{k-1} f_k q_{k-1})$$
 such that
\begin{itemize}
\item[(i)] $q_k$ commutes with $q_{k-1} f_k
q_{k-1}$;

\item[(ii)] $q_k$ belongs to $\mathcal{N}_k$ and $q_k f_k q_k \le \lambda \hskip1pt
q_k$;

\item[(iii)] the following estimate holds $$\varphi \Big(
\mathbf{1}_\mathcal{N} - \bigwedge_{k \in\Z} q_k \Big) \le
\frac{ \|f\|_1}{\lambda}.$$
\end{itemize}
Next we define the sequence $(p_k)_{k \in \Z}$ of pairwise disjoint projections by $p_k =
q_{k-1}-q_k$, so that $$\sum_{k \in \Z} p_k = \1_\mathcal{N} - q \quad \mbox{with} \quad q = \bigwedge_{k \in
\Z} q_k.$$
Then we obtain the Calder\'on-Zygmund decomposition of $f$: $f = g_d + g_\mathit{off} + b_d +
b_\mathit{off}$ with
$$\begin{array}{rclcrcl} \displaystyle g_d & = & \displaystyle
qfq + \sum_{k \in \Z} p_k f_k p_k, & \quad & g_\mathit{off} & = &
\displaystyle \sum_{i \neq j} p_i f_{i \vee j} p_j \ + \ q f
q^\perp + q^\perp f q, \\ [15pt] b_d & = & \displaystyle \sum_{k
\in \Z} p_k \hskip1pt (f - f_k) \hskip1pt p_k, & \quad &
b_{\mathit{off}} & = & \displaystyle \sum_{i \neq j} p_i (f-f_{i
\vee j}) p_j
\end{array}$$ where $i \vee j = \max (i,j)$ and $q^\perp =
\1_\mathcal{N} - q$, satisfying the properties that we collect in the following (see \cite{JP1,C} for more details).

\begin{lem}[\cite{JP1}]\label{r1}
The following diagonal estimates hold
\begin{equation}
\| g_d \|_2^2 \le 2^d
\lambda \, \|f\|_1 \quad \mbox{and} \quad \|
b_d \|_1 \le 2 \, \|f\|_1.
\end{equation}
\end{lem}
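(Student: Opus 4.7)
The plan is to extract two pointwise operator inequalities from Cuculescu's construction and then read off both norms by exploiting the pairwise orthogonality of the projections $\{q\}\cup\{p_k\}_{k\in\Z}$.

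The first task is to establish the pair
$$qfq \leq \lambda q \quad \text{and} \quad p_k f_k p_k \leq 2^d\lambda\, p_k.$$
The former is immediate from $q \leq q_k$ and property (ii), namely $qf_kq = q(q_k f_k q_k)q \leq \lambda q$, followed by passing to the $L_1$-limit $f_k \to f$ (the positive cone is $L_1$-closed). The latter is the only place where the dimension $d$ appears: positivity of $f$ together with the dyadic structure gives the pointwise comparison $f_k \leq 2^d f_{k-1}$ (an average over a cube is at most $2^d$ times the average over its dyadic parent), and since $p_k \leq q_{k-1}$ I can apply property (ii) one step earlier to obtain
$$p_k f_k p_k \leq 2^d p_k q_{k-1} f_{k-1} q_{k-1} p_k \leq 2^d\lambda\, p_k.$$
This dyadic doubling/off-by-one-level trick is the main obstacle; the rest of the argument is trace manipulation.

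Once these inequalities are in hand, the $L_2$-estimate for $g_d$ follows by expanding $g_d^2$: the cross terms between $qfq$ and $p_kf_kp_k$, and between $p_kf_kp_k$ and $p_lf_lp_l$ for $k\neq l$, all vanish by orthogonality, leaving $\|g_d\|_2^2 = \tau\big((qfq)^2\big) + \sum_k \tau\big((p_kf_kp_k)^2\big)$. Applying the two pointwise bounds inside the traces yields
$$\|g_d\|_2^2 \;\leq\; \lambda\,\tau(qfq) + 2^d\lambda\sum_k \tau(p_kf_kp_k).$$
Using cyclicity together with $p_k\in\mathcal N_k$ and $\varphi\circ\mathsf E_k = \varphi$, one rewrites $\tau(qfq) = \tau(fq)$ and $\tau(p_kf_kp_k) = \tau(fp_k)$; the right-hand side then telescopes via $\tau(fq) + \tau\big(f(\1_{\mathcal N}-q)\big) = \|f\|_1$ to give the announced bound $2^d\lambda\|f\|_1$.

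For $\|b_d\|_1$, the same orthogonality $p_kp_l=0$ for $k\neq l$ makes the self-adjoint summands of $b_d = \sum_k p_k(f-f_k)p_k$ pairwise block-orthogonal (with disjoint left and right supports), so $|b_d| = \sum_k |p_k(f-f_k)p_k|$ and hence $\|b_d\|_1 = \sum_k \|p_k(f-f_k)p_k\|_1$. The triangle inequality combined with $\tau(p_kf_kp_k) = \tau(p_kfp_k) = \tau(fp_k)$ bounds each term by $2\tau(fp_k)$, and the sum collapses to $2\tau\big(f(\1_{\mathcal N}-q)\big) \leq 2\|f\|_1$. The standing reduction $f\in\mathcal N_{c,+}$ with $m_\lambda(f)=0$ makes any convergence question routine, since only finitely many $p_k$'s effectively contribute at negative indices.
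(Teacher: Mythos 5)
Your proof is correct and follows essentially the same route as the cited argument in Parcet's paper [JP1]: establish the pointwise bounds $qfq \le \lambda q$ and $p_k f_k p_k \le 2^d\lambda\, p_k$ (the latter via the dyadic doubling $f_k\le 2^d f_{k-1}$ for positive $f$ together with $p_k\le q_{k-1}$ and Cuculescu's property at level $k-1$), then exploit the pairwise orthogonality of $q$ and the $p_k$'s together with $\varphi\circ\mathsf E_k=\varphi$ to telescope the traces down to $\|f\|_1$. All the steps check out, including the block-orthogonality used to write $\|b_d\|_1=\sum_k\|p_k(f-f_k)p_k\|_1$ and the observation $\varphi(p_kf_kp_k)=\varphi(fp_k)$.
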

\begin{lem}[\cite{C}] \label{r2}
One can reorganize $g_{\mathit{off}}$ as  $$g_{\mathit{off}} = \sum_{s=1}^\infty
\sum_{k=1}^\infty p_k df_{k+s} q_{k+s-1} + q_{k+s-1}
df_{k+s} p_k \triangleq \sum_{s=1}^\infty \sum_{k=1}^\infty
(g^{\ell}_{s,k}+g^{r}_{s,k}) \triangleq \sum_{s=1}^\infty (g^{\ell}_{s}+g^{r}_{s})$$ with
$$\sup_{s \ge 1} \|g^{\ell}_{s}\|_2^2 = \sup_{s \ge 1}
\sum_{k=1}^\infty \|g^{\ell}_{s,k}\|_2^2 \lesssim \lambda \,
\|f\|_1$$ and $\Delta_{k+s}(g^{\ell}_{s}) =g^{\ell}_{s,k}$ and the same conclusions for $g^{r}_{s}$.
\end{lem}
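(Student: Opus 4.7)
The plan has two parts: verifying the algebraic identity for $g_{\mathit{off}}$, and then establishing the $L_2$ estimate for $g^\ell_s$. For the identity, I would begin from the definition $g_{\mathit{off}} = qfq^\perp + q^\perp fq + \sum_{i \neq j} p_i f_{i\vee j} p_j$, parametrize the double sum by $k = \min(i,j)$ and $s = |i-j| \geq 1$, and invoke $q^\perp = \sum_k p_k$ to absorb the $qfq^\perp$ and $q^\perp fq$ boundary contributions. Then the telescoping
\[
\sum_{m \geq 1} (\1_{\mathcal N} - q_{m-1}) df_m q_{m-1} = \sum_{m \geq 1}\big[(\1_{\mathcal N} - q_{m-1}) f_m q_{m-1} - (\1_{\mathcal N} - q_m) f_m q_m\big],
\]
combined with the decompositions $q_{m-1} = q_m + p_m$ and $\1_{\mathcal N} - q_{m-1} = (\1_{\mathcal N} - q_m) - p_m$ and careful regrouping of the cross terms $(\1_{\mathcal N} - q_m) f_m p_m$, $p_m f_m q_m$ and $p_m f_m p_m$, should reproduce $\sum_{s,k \geq 1} p_k df_{k+s} q_{k+s-1}$, and symmetrically the right counterpart.

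For the $L_2$ estimate, the martingale-difference structure of $g^\ell_{s,k} = p_k df_{k+s} q_{k+s-1}$ at level $k+s$ (evident from $p_k, q_{k+s-1} \in \mathcal{N}_{k+s-1}$ together with $\mathsf{E}_{k+s-1}(df_{k+s}) = 0$) immediately yields $\Delta_{k+s}(g^\ell_s) = g^\ell_{s,k}$. Combined with the orthogonality $p_k p_{k'} = \delta_{k,k'} p_k$, Pythagoras then gives $\|g^\ell_s\|_2^2 = \sum_k \|p_k df_{k+s} q_{k+s-1}\|_2^2$. To bound each summand I would split $df_{k+s} = f_{k+s} - f_{k+s-1}$. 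Cuculescu's estimate $q_{k+s-1} f_{k+s-1} q_{k+s-1} \leq \lambda q_{k+s-1}$ provides $\|f_{k+s-1}^{1/2} q_{k+s-1}\|_\infty \leq \sqrt{\lambda}$, and H\"older together with $\|p_k f_{k+s-1}^{1/2}\|_2^2 = \tau(p_k f_{k+s-1}) = \tau(p_k f_k)$ (since $p_k \in \mathcal{N}_k$ and $\mathsf{E}_k(f_{k+s-1}) = f_k$) controls the $f_{k+s-1}$-piece. Summing via $\sum_k \tau(p_k f_k) = \tau((\1_\mathcal{N} - q)f) \leq \|f\|_1$ gives the desired $\lambda\|f\|_1$ bound uniformly in $s$.

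The main obstacle is the companion $f_{k+s}$-piece $\|p_k f_{k+s} q_{k+s-1}\|_2$, since $q_{k+s-1} f_{k+s} q_{k+s-1}$ is not controlled by $\lambda$ (only $q_{k+s} f_{k+s} q_{k+s} \leq \lambda q_{k+s}$ holds directly from Cuculescu). My plan is to exploit two further structural facts: the orthogonality $p_k q_{k+s-1} = 0$ (from $q_{k+s-1} \leq q_k$ and $p_k q_k = 0$), and the orthogonal block decomposition $q_{k+s-1} f_{k+s} q_{k+s-1} = q_{k+s} f_{k+s} q_{k+s} + p_{k+s} f_{k+s} p_{k+s}$, which is a consequence of $q_{k+s} f_{k+s} p_{k+s} = 0$ (itself following from $q_{k+s}$ commuting with $q_{k+s-1} f_{k+s} q_{k+s-1}$). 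This reduces the estimate to the controlled $q_{k+s}$-block $q_{k+s}f_{k+s}q_{k+s} \leq \lambda q_{k+s}$ and a further estimate on the $p_{k+s}$-block, which uses the pairwise disjointness $p_k \perp p_{k+s}$ and conditional-expectation reductions analogous to the ones above.
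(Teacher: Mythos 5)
The lemma is cited from \cite{C} rather than proved in the present paper, so the comparison is against what that argument actually requires. Your skeleton is essentially right: the martingale-difference identification $\Delta_{k+s}(g^\ell_s)=g^\ell_{s,k}$, the Pythagoras step, and the H\"older/Cuculescu treatment of the $f_{k+s-1}$-piece with the closure $\sum_k\varphi(p_kf_k)=\varphi((\1_\mathcal{N}-q)f)\leq\|f\|_1$ are all correct. On the algebraic reorganization, your displayed ``telescoping'' is not a term-by-term identity; the two sides differ by the telescoped remainder $\sum_m\big[(\1_\mathcal{N}-q_{m-1})f_{m-1}q_{m-1}-(\1_\mathcal{N}-q_m)f_mq_m\big]=-q^\perp fq$, which is exactly the boundary contribution you say you will absorb, and the bookkeeping does close once one also notes $p_kf_kq_k=0$ (from $q_k$ commuting with $q_{k-1}f_kq_{k-1}$). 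So that part is a matter of care, not of substance.

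The genuine gap is in the $f_{k+s}$-piece. The block decomposition $q_{k+s-1}f_{k+s}q_{k+s-1}=q_{k+s}f_{k+s}q_{k+s}+p_{k+s}f_{k+s}p_{k+s}$ is correct, but the remaining task is to bound $\|p_{k+s}f_{k+s}p_{k+s}\|_\infty$, and the tool you invoke --- pairwise disjointness $p_k\perp p_{k+s}$ --- has no bearing on it. What is needed is the dyadic \emph{doubling} property for the positive function $f$: every cube of $\Q_n$ sits inside a cube of $\Q_{n-1}$ of $2^d$ times its measure, so $f_n\leq2^df_{n-1}$, whence $q_{n-1}f_nq_{n-1}\leq2^d q_{n-1}f_{n-1}q_{n-1}\leq 2^d\lambda q_{n-1}$, and in particular $p_nf_np_n\leq2^d\lambda p_n$. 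This is the same dyadic fact that underlies Lemma~\ref{r1} for the diagonal good part. Once you have it, the block decomposition is superfluous: the bound $q_{k+s-1}f_{k+s}q_{k+s-1}\leq2^d\lambda q_{k+s-1}$ lets you handle $p_kf_{k+s}q_{k+s-1}$ exactly as you handled $p_kf_{k+s-1}q_{k+s-1}$, at the harmless cost of a factor $2^d$, and the lemma follows. Without this doubling input, your plan for the $p_{k+s}$-block does not close.
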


\begin{lem}[\cite{C}]\label{L3}
If one sets $p_{Q}:=p_{k}(x)$ for all $k$, $Q\in\Q_{k}$ and any $x\in Q$, and define
\begin{center}
$\zeta = \big(\bigvee\limits_{Q\in \Q} p_Q\1_{5Q}\big)^{\bot}$,
\end{center}
then
\begin{enumerate}
\item $\varphi(1-\zeta) \leq 5^d\dfrac{\|f\|_1}{\lambda}$;
\item for all cubes $Q \in \Q$, we have the following cancellation property:
\begin{center}
$x \in 5Q \Rightarrow \zeta(x)p_Q=p_Q\zeta(x) = 0$.
\end{center}
\end{enumerate}
\end{lem}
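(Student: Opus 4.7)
The plan is to handle parts (1) and (2) separately; in both cases the argument should be essentially a formal consequence of Cuculescu's mass bound from \cite{JP1} and the very definition of $\zeta$ as the orthogonal complement of the join $\bigvee_{Q\in\Q} p_Q \1_{5Q}$.

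For (1), I would start from the identity $\1_{\mathcal{N}}-\zeta = \bigvee_{Q\in\Q} p_Q\1_{5Q}$. Each term $p_Q\1_{5Q}$ is a genuine projection in $\mathcal{N}_\pi$ because $p_Q\in\M_\pi$ and the scalar indicator $\1_{5Q}$ commutes with every element of $\M$ inside $\mathcal{N}=L_\infty(\R^d)\bar\otimes\M$. Applying the standard subadditivity of the normal semifinite trace on joins of projections,
$$\varphi\Big(\bigvee_{Q\in\Q}p_Q\1_{5Q}\Big)\leq\sum_{Q\in\Q}\varphi(p_Q\1_{5Q})=\sum_{Q\in\Q}|5Q|\,\tau(p_Q)=5^d\sum_{Q\in\Q}|Q|\,\tau(p_Q).$$
I would then group the sum by generation and use that, by construction, $p_k\in\mathcal{N}_k$ is constant on each $Q\in\Q_k$; hence $\sum_{Q\in\Q_k}|Q|\tau(p_Q)=\int_{\R^d}\tau(p_k(x))\,dx=\varphi(p_k)$. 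Summing in $k$ and invoking $\sum_k p_k=\1_{\mathcal{N}}-q$ together with Cuculescu's property (iii) yields $\sum_k\varphi(p_k)=\varphi(\1_{\mathcal{N}}-q)\leq\|f\|_1/\lambda$, which multiplied by $5^d$ is exactly the bound claimed.

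For (2), the key observation is tautological: by definition of the join in $\mathcal{N}_\pi$, the projection $\zeta$ is orthogonal to each summand $p_Q\1_{5Q}$, so $\zeta\cdot p_Q\1_{5Q}=0$ as an element of $\mathcal{N}$. Evaluating pointwise at any $x\in 5Q$ gives $\1_{5Q}(x)=1$ and therefore $\zeta(x)p_Q=0$; the companion identity $p_Q\zeta(x)=0$ follows at once by taking adjoints, since both $\zeta(x)$ and $p_Q$ are self-adjoint.

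The only step that requires a moment's care is the countable subadditivity of $\varphi$ on the join of the (generally non-commuting) family $\{p_Q\1_{5Q}\}_{Q\in\Q}$. This is a standard consequence of the normality of $\varphi$ together with the finite identity $\varphi(e_1\vee e_2)\leq\varphi(e_1)+\varphi(e_2)$ for projections, and it is already used implicitly in the noncommutative Calder\'on--Zygmund decompositions of \cite{JP1,C}. Once this is granted, the rest of the argument is pure bookkeeping, so I do not anticipate any serious obstacle.
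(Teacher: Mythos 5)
Your proof is correct and follows the same route as the argument in \cite{C}, which the present paper cites for this lemma without reproducing a proof. Both parts rest on exactly the facts you isolate: for (1), normality of $\varphi$ plus trace subadditivity on joins of projections, the dyadic grouping $\sum_{Q\in\Q_k}|Q|\tau(p_Q)=\varphi(p_k)$, the disjointness $\sum_k p_k=\1_{\mathcal N}-q$, and Cuculescu's mass bound; for (2), the tautological orthogonality $\zeta\,(p_Q\1_{5Q})=0$ read pointwise at $x\in 5Q$, with $p_Q\zeta(x)=0$ following by self-adjointness.
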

\begin{lem}[\cite{C}]\label{L5}
If one sets $b_{i,j} = p_i(f-f_{i\vee j})p_j$ for all $i,j \in \Z$, then the following cancellation properties hold:
\begin{enumerate}
\item for all $i,j\in\Z$ and $Q\in \Q_{i\vee j}$, $\int_Q b_{i,j} = 0$;
\item for all $x,y\in\R^{d}$ such that $y \in 5Q_{x,i\wedge j}$, $\zeta(x)b_{i,j}(y)\zeta(x) = 0$.
\end{enumerate}
\end{lem}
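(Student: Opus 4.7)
The plan is to prove the two cancellation properties separately, using dyadic measurability for (1) and the disjointness identity from Lemma \ref{L3}(2) for (2).

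For part (1), I would set $k = i \vee j$ and observe that both $p_i$ and $p_j$ belong to $\mathcal{N}_k$, since $p_i \in \mathcal{N}_i$, $p_j \in \mathcal{N}_j$, and the filtration is increasing. Consequently on any $Q \in \Q_k$ the two projections are constant in the spatial variable, so they factor outside the integral to give
$$\int_Q b_{i,j}(y)\,dy \; = \; p_i\big|_Q \Big( \int_Q \big(f(y) - f_k(y)\big)\,dy \Big)\, p_j\big|_Q.$$
The bracketed integral vanishes because $f_k = \mathsf{E}_k f$ takes the constant value $f_Q$ on $Q$, whence $\int_Q f_k = |Q| f_Q = \int_Q f$. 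This closes (1).

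For part (2), I would set $k = i \wedge j$ and split into the two cases $i \le j$ (so $k = i$) and $j < i$ (so $k = j$), exploiting which side of the product $\zeta(x)\,p_i(y)\,(f(y) - f_{i \vee j}(y))\,p_j(y)\,\zeta(x)$ gets killed. In the first case, $p_i(y)$ equals the projection $p_{Q_{y,i}}$ attached to the unique cube of $\Q_i$ containing $y$, and the symmetry $y \in 5Q_{x,i} \Leftrightarrow x \in 5Q_{y,i}$ recorded earlier, combined with Lemma \ref{L3}(2), forces $\zeta(x)\,p_{Q_{y,i}} = 0$, annihilating the product from the left. In the second case the same reasoning applied to $p_j(y) = p_{Q_{y,j}}$ yields $p_{Q_{y,j}}\,\zeta(x) = 0$, annihilating the product from the right.

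Neither step looks delicate: part (1) is a direct measurability-plus-averaging calculation, and the only point to be careful about in part (2) is the scale matching, so that the projection coming from the \emph{smaller} of $i$ and $j$ (which is what Lemma \ref{L3}(2) can be applied to, since its hypothesis involves $5Q$ with $x \in 5Q$ at that scale) is the one paired with an occurrence of $\zeta(x)$.
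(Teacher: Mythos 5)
Your proof is correct. The paper itself does not supply an argument for this lemma (it is cited from Cadilhac's work \cite{C}), so there is no in-text proof to compare against; but your reasoning supplies exactly what is needed. For (1) you correctly use that $p_i, p_j \in \mathcal{N}_{i\vee j}$ are constant on each $Q \in \Q_{i\vee j}$, so that they factor out and the integral reduces to $\int_Q (f - \mathsf{E}_{i\vee j}f) = 0$. For (2) you correctly identify that the projection at scale $i\wedge j$ is the one that Lemma \ref{L3}(2) can annihilate, and you use the symmetry $y \in 5Q_{x,i\wedge j} \Leftrightarrow x \in 5Q_{y,i\wedge j}$ to match the hypothesis of that lemma; the case split by whether the small-scale projection sits on the left or the right is sound (although Lemma \ref{L3}(2) in fact kills $p_Q\zeta(x)$ and $\zeta(x)p_Q$ on both sides, so the case distinction is slightly more careful than strictly necessary).
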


\subsection{Some preliminary reductions and technical lemmas}

To prove the weak type $(1,1)$ boundedness of $(T_k)$---a square function estimate, we need a noncommutative Khintchine inequality in $L_{1,\infty}$ for a Rademacher sequence $(\varepsilon_{k})$ on a fixed probability space $(\Omega,P)$, which was essentially established by Cadilhac \cite[Corollary 3.2]{C1}, to linearize the underlying operator.
\begin{lem}[\cite{C1}]\label{C1}
For any finite sequence $(u_{k})$ in $L_{1,\infty}(\mathcal{N})$, we have
$$\big\|\sum_{k}\varepsilon_{k}u_{k}\big\|_{L_{1,\infty}(L_{\infty}(\Omega)\overline{\otimes}
\mathcal N)}\backsimeq\|(u_{k})\|_{L_{1,\infty}(\mathcal{N};
\ell_{2}^{rc})}.$$
\end{lem}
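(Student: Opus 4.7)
The plan is to prove the two inequalities of the equivalence separately, since $L_{1,\infty}$ is not a Banach space and admits no convenient duality that would collapse the two directions into one.

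\textbf{Upper bound.} Given any decomposition $u_k = g_k + h_k$ nearly realising the infimum defining $\|(u_k)\|_{L_{1,\infty}(\mathcal{N};\ell_{2}^{rc})}$, the quasi-triangle inequality for $L_{1,\infty}$ recorded in Section~2 reduces the task to bounding $\|\sum_k \varepsilon_k g_k\|_{L_{1,\infty}}$ by the column norm $\|(g_k)\|_{L_{1,\infty}(\mathcal{N};\ell_{2}^{c})}$; the row piece is symmetric via adjoints. Set $X = (\sum_k |g_k|^2)^{1/2}$ and let $r_\lambda = \chi_{[0,\lambda]}(X)$ be its spectral projection in $\mathcal{N}$, so that $\varphi(\mathbf{1}-r_\lambda) \leq \lambda^{-1}\|X\|_{1,\infty}$. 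Writing $S=\sum_k\varepsilon_k g_k$ and $R=\mathbf{1}_\Omega\otimes r_\lambda$, I would split $S = SR + SR^\perp$. The piece $SR^\perp$ has support contained in $R^\perp$, so $(P\otimes\varphi)(|SR^\perp|>0) \leq \varphi(r_\lambda^\perp) \leq \lambda^{-1}\|X\|_{1,\infty}$. For $SR$, integrating out the Rademacher variables gives $\|SR\|_2^2 = \varphi(r_\lambda X^2 r_\lambda)$, and a layer-cake computation over the spectral measure of $X$ yields $\varphi(r_\lambda X^2 r_\lambda) \leq 2\lambda\|X\|_{1,\infty}$. A Chebyshev inequality then produces the desired weak-$L_{1}$ bound at level $\lambda$.

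\textbf{Lower bound.} Here one must produce a column-plus-row decomposition of $(u_k)$ starting from the weak-$L_{1}$ control of the Rademacher chaos $S = \sum_k \varepsilon_k u_k$. Viewing $S$ as the terminal value of the martingale $S_n = \sum_{k\leq n}\varepsilon_k u_k$ in $L_\infty(\Omega) \bar\otimes \mathcal{N}$ relative to the natural filtration generated by $\varepsilon_1,\ldots,\varepsilon_n$, the plan is to apply a noncommutative Davis-type decomposition in the spirit of Randrianantoanina to split $S = A + B$, where $A$ is $L_2$-bounded and $B$ has bounded $L_1$-variation, both with norms controlled by $\|S\|_{1,\infty}$. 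Because the $n$-th martingale difference in this filtration is forced into the form $\varepsilon_n \otimes (\,\cdot\,)$ with the second factor in $\mathcal{N}$, extracting the $\varepsilon$-coefficients of $dA_n$ and $dB_n$ produces operators $g_n, h_n \in \mathcal{N}$ with $u_n = g_n + h_n$. The $L_2$-bound on $A$ combined with the Rademacher identity $\mathbb{E}_\varepsilon |A|^2 = \sum_k|g_k|^2$ translates into $\|(g_k)\|_{L_{1,\infty}(\mathcal{N};\ell_{2}^{c})} \lesssim \|S\|_{1,\infty}$, and the $L_1$-variation estimate for $B$ feeds into the row norm.

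\textbf{Main obstacle.} The delicate step is the lower bound. Unlike the classical Lust--Piquard $L_{1}$-Khintchine inequality, where duality against a noncommutative $H^{1}$--$\mathrm{BMO}$ pair essentially dictates the decomposition, weak $L_{1}$ offers no such dual description. The splitting must therefore be built by hand at the tensor level and then pushed back to $\mathcal{N}$-valued coefficients, and Randrianantoanina's Davis-type construction is not manifestly compatible with the Rademacher conditional expectations $\mathbb{E}[\,\cdot\,|\varepsilon_n]$. Arranging that the extracted coefficients $(g_k),(h_k)$ deliver the correct weak-type column and row bounds --- uniformly in $\lambda$, so that a \emph{single} decomposition of $(u_k)$ serves the infimum defining the sum norm --- is precisely where Cadilhac's argument concentrates its technical ingenuity.
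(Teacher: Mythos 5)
The paper does not actually prove Lemma \ref{C1}; it cites it from Cadilhac \cite[Corollary 3.2]{C1}. So there is no in-paper argument to compare against, and I can only assess your sketch on its own merits.

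Your upper bound is fine and is the standard spectral-slicing argument: with $X=(\sum_k|g_k|^2)^{1/2}$, $r_\lambda=\chi_{[0,\lambda]}(X)$, the splitting $S=SR+SR^\perp$ handles the tail by a support bound and the bulk by an $L_2$ Chebyshev estimate after the layer-cake computation $\varphi(r_\lambda X^2 r_\lambda)\leq 2\lambda\|X\|_{1,\infty}$.

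Your lower bound, however, has a concrete gap. The decomposition you posit --- $S=A+B$ with $\|A\|_2\lesssim\|S\|_{1,\infty}$ and $B$ of $L_1$-bounded variation $\lesssim\|S\|_{1,\infty}$ --- does not exist in general. Already for a one-step Rademacher martingale, $S=\varepsilon_1 u_1$, this would force a splitting $u_1=g_1+h_1$ with $\|g_1\|_{L_2(\mathcal N)}\lesssim\|u_1\|_{1,\infty}$ and $\|h_1\|_{L_1(\mathcal N)}\lesssim\|u_1\|_{1,\infty}$; but taking $\M=\C$ and $u_1(x)=|x|^{-1}$ on $\R$ gives an element of $L_{1,\infty}$ that lies outside $L_1+L_2$, so no such splitting exists. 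Randrianantoanina's weak-type Davis and Burkholder--Gundy decompositions take an $L_1$-bounded martingale as \emph{input} and deliver weak-$L_1$ control on the column/row square functions as \emph{output}; they neither start from an $L_{1,\infty}$ hypothesis nor land the pieces in $L_2$ and $L_1$. Accordingly, the subsequent step --- deducing $\|(g_k)\|_{L_{1,\infty}(\mathcal N;\ell_2^c)}\lesssim\|S\|_{1,\infty}$ from an $L_2$ bound on $A$ --- is also a non sequitur: a strong $L_2$ bound on the column square function does not give a weak-$L_1$ bound on it. Cadilhac's proof of the lower bound is not of Davis type; it proceeds by interpolation-theoretic means, exploiting that $L_{1,\infty}$ is a (K-monotone) interpolation space for the couple $(L_1,L_\infty)$ together with the known $L_p$-Khintchine inequalities, which is what lets one bound the sum norm without manufacturing a decomposition from a single martingale splitting. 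So the hard direction of your sketch needs to be replaced, not just tightened.
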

Recalling
\begin{equation}\label{9898}
{T}f(x)=\sum_{k}\varepsilon_{k}T_kf(x)=\sum_{k}\varepsilon_{k}(M_{k}-\mathsf{E}_k)f(x),
\end{equation}
where the summation is taken over a fixed finite subset of $\mathbb Z$ (see the remark after \eqref{finite}),
we immediately obtain the following corollary from Lemma \ref{C1}.
\begin{cor}\label{inte}
Let $h\in \mathcal{N}_{c,+}$. Then we have
$$\|(T_{k}h)\|_{L_{1,\infty}(\mathcal{N};\ell_{2}^{rc})}\backsimeq
\|{T}h\|_{L_{1,\infty}(L_{\infty}(\Omega)\overline{\otimes}\mathcal N)}.$$
\end{cor}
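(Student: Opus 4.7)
The plan is that this corollary is essentially an immediate application of Lemma \ref{C1} with the specific choice $u_k = T_k h$, so the work reduces to verifying that the hypotheses of Lemma \ref{C1} are met and then unpacking the definition of $Th$.

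First I would check that, under the convention recalled after \eqref{finite} (see Remark \ref{app}), the sum defining $Th$ in \eqref{9898} is taken over a fixed \emph{finite} subset of $\mathbb{Z}$, so that $(T_kh)$ is a finite sequence as required by Lemma \ref{C1}. Next I would verify that each entry lies in $L_{1,\infty}(\mathcal{N})$: since $h\in\mathcal{N}_{c,+}\subset L_1(\mathcal{N})$, the averaging operator $M_k$ is a convolution with $|B_k|^{-1}\mathbf 1_{B_k}$ and is thus contractive on $L_1(\mathcal{N})$, while the conditional expectation $\mathsf{E}_k$ is a contractive projection on $L_1(\mathcal N)$. Consequently $T_kh=(M_k-\mathsf{E}_k)h\in L_1(\mathcal{N})\subset L_{1,\infty}(\mathcal{N})$ and all hypotheses of Lemma \ref{C1} are in force.

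Applying Lemma \ref{C1} with $u_k=T_kh$ then yields
\begin{equation*}
\Big\|\sum_{k}\varepsilon_{k}T_kh\Big\|_{L_{1,\infty}(L_{\infty}(\Omega)\overline{\otimes}\mathcal N)}\;\backsimeq\;\|(T_kh)\|_{L_{1,\infty}(\mathcal{N};\ell_{2}^{rc})}.
\end{equation*}
By the very definition \eqref{9898} of the linearised operator $T$, the left-hand side equals $\|Th\|_{L_{1,\infty}(L_{\infty}(\Omega)\overline{\otimes}\mathcal N)}$, which is precisely the claimed equivalence.

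There is essentially no obstacle in this statement; it is simply the specialisation of the general $L_{1,\infty}$-Khintchine inequality to our setting, recorded here because in the subsequent sections the weak type $(1,1)$ bound for the square function $(T_kh)$ will be reduced via this equivalence to the (linear) weak type $(1,1)$ bound for the single operator $T$, for which the Calder\'on--Zygmund decomposition of Section 3.1 together with the almost orthogonality principle can be applied.
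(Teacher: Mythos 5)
Your proof is correct and follows exactly the route the paper takes: the paper states Corollary \ref{inte} as an immediate consequence of Lemma \ref{C1}, and you supply the (straightforward) verifications — finiteness of the sum per Remark \ref{app}, $T_k h \in L_1(\mathcal N)\subset L_{1,\infty}(\mathcal N)$ since $M_k$ and $\mathsf E_k$ are $L_1$-contractions — that the paper leaves implicit.
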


With Corollary \ref{inte}, it suffices to verify $$\|Tf\|_{L_{1,\infty}(L_{\infty}(\Omega)\overline{\otimes}\mathcal N)}\lesssim\|f\|_{1}.$$ Applying the distributional inequality (see e.g. \cite[Lemma 2.1]{JRWZ}) we have
$$\widetilde{\varphi}\Big(|Tf|>\lambda
\Big)\leq\widetilde{\varphi}\Big(|Tb_{d}|>\frac{\lambda}{4}
\Big)+\widetilde{\varphi}\Big(|Tb_{\mathit{off}}|>\frac{\lambda}{4}
\Big)+
\widetilde{\varphi}\Big(|Tg_{d}|>\frac{\lambda}{4}
\Big)+\widetilde{\varphi}\Big(|Tg_{\mathit{off}}|>\frac{\lambda}{4}
\Big),$$
where $\widetilde{\varphi}=\int_{\Omega}\otimes\varphi$. Hence, it suffices to prove
$$\widetilde{\varphi}\Big(|Th|>\lambda
\Big)\lesssim\frac{\|f\|_{1}}{\lambda}$$
for $h=b_{d}$, $b_{\mathit{off}}$, $g_{d}$ and $g_{\mathit{off}}$.

\bigskip

The following two lemmas will be frequently used in the rest of the proof.

The first one is an almost orthogonality principle, which is well-known in classical harmonic analysis, see for instance \cite{JRW03, HM1}.
\begin{lem}\label{L1}
Let $S_{k}$ be a bounded linear map on $L_2$ for each $k\in \Z$ and $h\in L_2$.
If $(u_{n})_{n\in\Z}$ and $(v_{n})_{n\in\Z}$ are two sequences of functions in $L_{2}$ such that $h=\sum\limits_{n\in\Z}u_{n}$ and $\sum\limits_{n\in\Z}\|v_{n}\|_{2}^{2}<\infty$, then
$$\sum_{k}\|S_{k}h\|_{2}^{2}\leq w^{2}\sum_{n\in\Z}\|v_{n}\|_{2}^{2}$$
provided that there is a sequence $(\sigma(j))_{j\in \Z}$ of positive numbers with $w=\sum\limits_{j\in\Z}\sigma(j)<\infty$ such that
$$\|S_{k}(u_{n})\|_{2}\leq \sigma(n-k)\|v_{n}\|_{2}$$
for every $n,k$.
\end{lem}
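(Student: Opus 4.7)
My plan is to prove this by a Schur-type almost orthogonality argument: decompose $S_k h=\sum_n S_k u_n$ using the hypothesis $h=\sum_n u_n$, bound each piece pointwise in $k$ using the given estimate $\|S_k u_n\|_2 \leq \sigma(n-k)\|v_n\|_2$, and then recognise the remaining double sum in $k$ and $n$ as a convolution of the sequence $(\|v_n\|_2^2)$ against $\sigma$ on $\Z$, whose total $\ell_1$-mass is $w^2$ by Young's inequality.

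Concretely, I would first invoke the boundedness of $S_k$ together with the $L_2$-convergence of $\sum_n u_n$ to commute $S_k$ with the sum, and then combine the triangle inequality with the hypothesis to reach
$$\|S_k h\|_2 \;\leq\; \sum_{n\in\Z}\|S_k u_n\|_2 \;\leq\; \sum_{n\in\Z}\sigma(n-k)\,\|v_n\|_2.$$
Next, the key trick is to split $\sigma(n-k)=\sigma(n-k)^{1/2}\cdot\sigma(n-k)^{1/2}$ and apply Cauchy--Schwarz in the variable $n$, which yields the pointwise bound
$$\|S_k h\|_2^2 \;\leq\; \Bigl(\sum_{n\in\Z}\sigma(n-k)\Bigr)\Bigl(\sum_{n\in\Z}\sigma(n-k)\,\|v_n\|_2^2\Bigr) \;=\; w\sum_{n\in\Z}\sigma(n-k)\,\|v_n\|_2^2.$$
Finally, I would sum over $k\in\Z$ and exchange the order of summation by Tonelli (permissible because all terms are nonnegative), picking up another factor of $\sum_k\sigma(n-k)=w$ and arriving at $\sum_k\|S_k h\|_2^2 \leq w^2 \sum_n\|v_n\|_2^2$, which is the claim.

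I do not anticipate any serious obstacle here: the absolute convergence required to justify moving $S_k$ past the sum and to apply Tonelli is immediate from the hypotheses $w<\infty$ and $\sum_n\|v_n\|_2^2<\infty$, and the whole argument uses only the Hilbert space structure of $L_2$. In particular, the same proof will apply without modification when $L_2$ is interpreted as the noncommutative $L_2(\mathcal N)$ in which the lemma will later be invoked to control $\sum_k\|T_k f\|_2^2$ via a carefully chosen decomposition of $f$.
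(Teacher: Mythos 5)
Your proof is correct and complete. The paper does not actually supply a proof of this lemma: it is stated as a ``well-known'' almost orthogonality principle with a pointer to \cite{JRW03, HM1}, so there is nothing to compare against in the text itself. Your argument is the standard Schur-test/Cotlar-type computation --- triangle inequality after pushing $S_k$ through the sum, the $\sigma^{1/2}\cdot\sigma^{1/2}$ split plus Cauchy--Schwarz to produce one factor of $w$, then Tonelli in $k$ to produce the second --- and it is exactly what the cited sources prove. Your closing remark is also right that the argument only uses linearity and boundedness of $S_k$ together with the norm structure of $L_2$, so it transfers verbatim to $L_2(\mathcal N)$, which is what the paper needs in \eqref{bad} and \eqref{88}.

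One small point worth making explicit when you invoke $S_k h=\sum_n S_k u_n$: the series $\sum_n S_k u_n$ converges absolutely in $L_2$, since
\[
\sum_{n}\|S_k u_n\|_2\le\sum_{n}\sigma(n-k)\|v_n\|_2\le\Big(\sum_n\sigma(n-k)\Big)^{1/2}\Big(\sum_n\sigma(n-k)\|v_n\|_2^2\Big)^{1/2}<\infty,
\]
using $\sup_j\sigma(j)\le w<\infty$ and $\sum_n\|v_n\|_2^2<\infty$. Combined with $L_2$-boundedness of $S_k$ and $L_2$-convergence of $\sum_n u_n$ to $h$, this fully justifies the interchange; you alluded to this but it is cleanest to record it as the reason the triangle inequality applies to the infinite sum.
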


Let $n\in\mathbb Z$ and $B\subset\R^{d}$ be a Euclidean ball, we define
\begin{eqnarray}\label{C18}
\mathcal{I}(B,n)=\bigcup_{{\begin{subarray}{c}
Q\in\Q_{n} \\ \partial B\cap Q\neq \emptyset
\end{subarray}}} Q\cap B.
\end{eqnarray}
Then for integer $k<n$, the linear operator $M_{k,n}: L_1(\mathcal N)\rightarrow L_1(\mathcal N)$ is defined as
\begin{align}\label{mkn}
M_{k,n}h(x)=\frac{1}{|B_k|}\int_{\mathcal{I}(B_k+x,n)}h(y)dy.
\end{align}

\begin{lem}\label{kn}
Let $M_{k,n}$ be the linear operator defined as above for $k<n$. Then for $1\leq p\leq\infty$ and $h\in L_p(\mathcal N)$, we have
$$\|M_{k,n}h\|_{p}\lesssim 2^{k-n}\|h\|_p;$$
if moreover $h$ is positive, then
$$\|M_{k,n}h\|_{p}\lesssim 2^{k-n}\|h_n\|_p$$
where $h_n=\mathsf{E}_n(h)$.
\end{lem}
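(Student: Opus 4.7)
The plan is to regard $M_{k,n}$ as an integral operator on $\R^{d}$ with the \emph{non-negative scalar} kernel
\[
K_{k,n}(x,y) \;=\; \frac{1}{|B_k|}\,\1_{\mathcal{I}(B_k+x,n)}(y),
\]
so that its $L_{p}$-boundedness on operator-valued functions reduces, via a Schur test combined with Minkowski's integral inequality in the identification $L_{p}(\mathcal N)=L_{p}(\R^{d};L_{p}(\mathcal M))$, to the two uniform estimates
\[
\sup_{x\in\R^{d}}\int_{\R^{d}}K_{k,n}(x,y)\,dy\;\lesssim\;2^{k-n},\qquad \sup_{y\in\R^{d}}\int_{\R^{d}}K_{k,n}(x,y)\,dx\;\lesssim\;2^{k-n}.
\]

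Both of these are geometric. For the first, the set $\mathcal{I}(B_{k}+x,n)$ is contained in the union of those cubes $Q\in\Q_{n}$ which meet the sphere $\partial(B_{k}+x)$; because $n>k$, this union lies inside a $C\cdot 2^{-n}$-neighbourhood of that sphere, which has volume $\lesssim 2^{-k(d-1)}\cdot 2^{-n}$ since the sphere has $(d-1)$-dimensional area $\simeq 2^{-k(d-1)}$. Division by $|B_{k}|\simeq 2^{-kd}$ yields the row bound. For the column bound, the condition $y\in\mathcal{I}(B_{k}+x,n)$ forces $\partial(B_{k}+x)$ to meet $Q_{y,n}$, so the set of admissible $x$ lies in the annulus of inner radius $2^{-k}-\sqrt{d}\,2^{-n}$ and outer radius $2^{-k}+\sqrt{d}\,2^{-n}$ centred at the centre of $Q_{y,n}$, which again has volume $\lesssim 2^{-k(d-1)}\cdot 2^{-n}$.

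Equipped with these two bounds, the scalar Schur test produces $\|Sg\|_{L_{p}(\R^{d})}\lesssim 2^{k-n}\|g\|_{L_{p}(\R^{d})}$ for every $p\in[1,\infty]$, where $S$ is the scalar convolution-type operator with kernel $K_{k,n}$. Since Minkowski's inequality gives $\|M_{k,n}h(x)\|_{L_{p}(\mathcal M)}\leq \int K_{k,n}(x,y)\,\|h(y)\|_{L_{p}(\mathcal M)}\,dy = S(\|h(\cdot)\|_{L_{p}(\mathcal M)})(x)$, the first assertion follows for $1\leq p<\infty$; the case $p=\infty$ is the direct row-sum estimate. For the refinement when $h\geq 0$, enlarge $\mathcal{I}(B_{k}+x,n)$ to
\[
\overline{\mathcal{I}}(B_{k}+x,n)\;:=\;\bigcup\{Q\in\Q_{n}:\partial(B_{k}+x)\cap Q\neq\emptyset\},
\]
which is now a disjoint union of \emph{entire} cubes of $\Q_{n}$. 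Positivity gives $0\leq M_{k,n}h(x)\leq \frac{1}{|B_{k}|}\int\1_{\overline{\mathcal{I}}(B_{k}+x,n)}(y)\,h(y)\,dy$, and since the integrating indicator is $\sigma_{n}$-measurable in $y$ we may replace $h$ by $h_{n}=\mathsf{E}_{n}h$ without altering the integral. The geometric bounds above apply verbatim to the enlarged kernel (up to a dimensional constant), so the Schur/Minkowski step applied to $h_{n}$ delivers $\|M_{k,n}h\|_{p}\lesssim 2^{k-n}\|h_{n}\|_{p}$.

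The principal obstacle is thus the Minkowski-content estimate of the second paragraph: one must confirm, uniformly in $x$ and in $y$, that the $\Q_{n}$-cubes cut by a sphere of radius $2^{-k}$ contribute volume comparable to the surface area times $2^{-n}$. Once this is in hand, everything else — the reduction to a scalar Schur test, the transfer to operator-valued $L_{p}(\mathcal N)$ via Minkowski, and the passage from $h$ to $h_{n}$ in the positive case — is routine.
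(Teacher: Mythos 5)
Your proposal is correct and takes essentially the same route as the paper: what the paper writes as a Minkowski--H\"older--Fubini chain is exactly the Schur test in disguise, it relies on the same two geometric size estimates (the paper states the row bound and uses the column bound implicitly via Fubini), and the positive-case refinement is the same cube-enlargement device allowing the replacement of $h$ by $h_n$ under the integral. You make the column bound and the $p=\infty$ endpoint explicit, which is a modest gain in clarity, but the substance is identical.
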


\begin{proof}
Let $p\in[1,\infty]$ and $p^\prime$ be its conjugate index and $h\in L_p(\mathcal N)$. By the Minkowski and H\"older inequalities
\begin{align*}\|M_{k,n}h(x)\|_{L_p(\mathcal M)}&\leq \frac{1}{|B_k|}\int_{\mathcal{I}(B_k+x,n)}\|h(y)\|_{L_p(\mathcal M)}dy\\
&\leq \frac{|\mathcal{I}(B_k+x,n)|^{\frac{1}{p^\prime}}}{|B_k|}\big(\int_{\mathcal{I}(B_k+x,n)}\|h(y)\|^p_{L_p(\mathcal M)}dy\big)^{\frac1p}.
\end{align*}
Note that the measure of $\mathcal{I}(B_k+x,n)$ is not more than a constant multiple of $2^{-n}2^{(d-1)(-k)}$.
Taking power $p$ and integrating over $\mathbb R^d$, one gets by Fubini's theorem
\begin{align*}
\|M_{k,n}h\|^p_{p}\lesssim  \frac{(2^{-n}2^{(d-1)(-k)})^{\frac{p}{p^\prime}}}{(2^{-kd})^p}2^{-n}2^{(d-1)(-k)}\|h\|_p^p=2^{(k-n)p}\|h\|_p^p,
\end{align*}
which gives the first estimate.

If moreover $h\in L_p(\mathcal N)$ is positive, one has
$$M_{k,n}h(x)\leq \frac{1}{|B_k|}\bigcup_{{\begin{subarray}{c}
Q\in\Q_{n} \\ \partial B_k+x\cap Q\neq \emptyset
\end{subarray}}} \int_{Q}h(y)dy= \frac{1}{|B_k|}\bigcup_{{\begin{subarray}{c}
Q\in\Q_{n} \\ \partial B_k+x\cap Q\neq \emptyset
\end{subarray}}} \int_{Q}h_n(y)dy.$$
By the fact that the measure of the union of the dyadic cubes in $\Q_{n}$ which intersects with the boundary of $x+B_{k}$ is not more than a constant multiple of $2^{-n}2^{(d-1)(-k)}$, the same argument as above yields the second estimate.
\end{proof}

\subsection{Estimates for the bad function}

\subsubsection{Estimate for $Tb_d$}
Let us now
prove the assertion for $b_d$.
Using the
projection $\zeta$ introduced in Lemma \ref{L3}, we
consider the following decomposition $$Tb_d = (\1_\mathcal{N} - \zeta) T
b_d (\1_\mathcal{N} - \zeta) + \zeta \hskip1pt T b_d (\1_\mathcal{N} - \zeta)
+ (\1_\mathcal{N} - \zeta) T b_d \zeta + \zeta \hskip1pt T b_d
\zeta.$$
Therefore, Lemma \ref{L3} gives:
\begin{align*}
\hskip1pt \widetilde{\varphi} \Big(|Tb_d|>\frac{\lambda}{4}
\Big)
\lesssim&\ \hskip1pt \varphi (\1_\mathcal{N} - \zeta) +
\hskip1pt \widetilde{\varphi} \Big(|\zeta Tb_d\zeta|>\frac{\lambda}{16}
\Big)\\
\lesssim&\ \frac{ \|f\|_{1}}{\lambda}+\widetilde{\varphi} \Big(|\zeta Tb_d\zeta|>\frac{\lambda}{16}
\Big).
\end{align*}
Hence, our aim is to estimate $\widetilde{\varphi} \Big(|\zeta Tb_d\zeta|>\frac{\lambda}{16}
\Big)$.

\begin{prop}\label{C8}
The following estimate holds true
$$\lambda\widetilde{\varphi} \Big(|\zeta Tb_{d}\zeta|>\frac{\lambda}{16}
\Big)\lesssim\|f\|_{1}.$$
\end{prop}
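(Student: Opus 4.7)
The plan is to reduce the weak-type inequality to an $L_2$ estimate via Chebyshev, decompose $b_d$ dyadically, and combine the two cancellations from Lemma \ref{L5} with the almost orthogonality principle of Lemma \ref{L1}.

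First I would apply Chebyshev to reduce the problem to showing
$$\|\zeta T b_d \zeta\|_{L_2(L_\infty(\Omega)\overline{\otimes}\mathcal N)}^2 \lesssim \lambda\|f\|_1.$$
By the orthonormality of $(\varepsilon_k)$ in $L_2(\Omega)$, the left-hand side equals $\sum_k \|\zeta T_k b_d \zeta\|_{L_2(\mathcal N)}^2$. I then write $b_d = \sum_n b_{d,n}$ with $b_{d,n} = p_n(f-f_n)p_n = b_{n,n}$ in the notation of Lemma \ref{L5}, so that $\zeta T_k b_d \zeta = \sum_n \zeta T_k b_{d,n}\zeta$.

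Next I would exploit the two cancellations. Since $B_k + x \subset 3Q_{x,k}$, the kernel of $T_k$ is supported in $\{y\in 5Q_{x,k}\}$. Lemma \ref{L5}(2) gives $\zeta(x)b_{d,n}(y)\zeta(x) = 0$ whenever $y \in 5Q_{x,n}$; since $5Q_{x,k}\subset 5Q_{x,n}$ for $n\leq k$, this kills the block entirely and $\zeta T_k b_{d,n}\zeta\equiv 0$ in that range. For $n>k$, Lemma \ref{L5}(1) forces $\mathsf{E}_k b_{d,n} = 0$ and reduces $M_k b_{d,n}$ to the boundary integral $M_{k,n} b_{d,n}$ defined in \eqref{mkn}. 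The Minkowski--H\"older argument underlying Lemma \ref{kn}, applied to the operator-valued function $b_{d,n}$, then gives the off-diagonal decay
$$\|\zeta T_k b_{d,n}\zeta\|_{L_2(\mathcal N)} \lesssim 2^{k-n}\,\|b_{d,n}\|_{L_2(\mathcal N)}, \qquad n>k.$$
I would feed this into Lemma \ref{L1} with $S_k = \zeta T_k\zeta$, $u_n = b_{d,n}$, $\sigma(n-k) = C\cdot 2^{-(n-k)}\1_{n>k}$, and an auxiliary sequence $(v_n)$, reducing the task to a bound of the form $\sum_n\|v_n\|_2^2 \lesssim \lambda\|f\|_1$.

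The main obstacle is precisely this last sum. The naive $v_n = b_{d,n}$ cannot work: the scalar example $f = N\1_{[0,1/N]}$ with $\lambda = 1$ already shows that $\sum_n\|b_{d,n}\|_2^2$ grows with $N$ and is not controlled by $\lambda\|f\|_1$, so the $\zeta$-cancellation must be built into each block's $L_2$ estimate rather than used only as the trivial $\|\zeta\cdot\zeta\|_2 \leq \|\cdot\|_2$. Concretely, I would split $b_{d,n} = p_nfp_n - p_nf_np_n$. The stopped piece satisfies $p_nf_np_n \leq 2^d\lambda p_n$ (a consequence of the pointwise inequality $f_n \leq 2^d f_{n-1}$, valid for positive $f$ under dyadic averaging, combined with the Cuculescu control $q_{n-1}f_{n-1}q_{n-1}\leq \lambda q_{n-1}$); hence $\|p_nf_np_n\|_2^2 \leq 2^d\lambda\,\tau(fp_n)$, and summing in $n$ gives $\sum_n\|p_nf_np_n\|_2^2 \leq 2^d\lambda\,\tau(f(\1-q)) \leq 2^d\lambda\|f\|_1$. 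For the rough piece $p_nfp_n$, which has no $L_\infty$ bound, I would re-open the analysis of $\zeta T_k(p_nfp_n)\zeta$: after $\zeta$-conjugation the effective $y$-integration avoids $5Q_{x,n}$, and combining this spatial restriction with the disjoint-support geometry of the projections $(p_n)$ should produce an output bound of the form $\|v_n\|_2^2 \lesssim \lambda^2\varphi(p_n)$, which then sums to $\lambda^2\cdot\|f\|_1/\lambda = \lambda\|f\|_1$ via the Cuculescu mass bound $\sum_n\varphi(p_n)\leq\|f\|_1/\lambda$. Keeping precise track of how the boundary set $\mathcal I(B_k+x,n)$ meets the complement of $5Q_{x,n}$ is the most delicate step, and is where the absence of kernel regularity is compensated by the almost orthogonality.
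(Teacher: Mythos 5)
Your setup matches the paper's proof almost exactly: Chebyshev reduces the weak bound to an $L_2$ estimate, orthogonality of $(\varepsilon_k)$ converts it to $\sum_k\|\zeta T_k b_d\zeta\|_2^2$, and you correctly split off the case $k\geq n$ (killed entirely by Lemma~\ref{L5}(2) via the $\zeta$-conjugation) from the case $k<n$ (where $\mathsf{E}_k b_n=0$ and $M_k b_n=M_{k,n}b_n$). You are also right that the naive choice $v_n=b_{d,n}$ in Lemma~\ref{L1} cannot close, and your scalar example illustrating this is valid.

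Where you go astray is in the proposed repair for the rough piece $p_nfp_n$. You suggest re-examining $\zeta T_k(p_nfp_n)\zeta$ and extracting extra decay from the geometry of $5Q_{x,n}$. But for $k<n$ the ball $B_k+x$ has radius $2^{-k}\gg 2^{-n}$, so the boundary set $\mathcal{I}(B_k+x,n)$ lies entirely outside $5Q_{x,n}$; the $\zeta$-cancellation of Lemma~\ref{L5}(2) gives nothing there, and indeed the paper drops $\zeta$ altogether in this range. The actual resolution is the \emph{second} estimate in Lemma~\ref{kn}, which you invoke only in its first (weaker) form: for a \emph{positive} operator $h$ one has $\|M_{k,n}h\|_p\lesssim 2^{k-n}\|\mathsf{E}_n h\|_p$, because $\int_Q h=\int_Q \mathsf{E}_n h$ over each $Q\in\Q_n$ meeting $\partial(B_k+x)$. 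Writing $b_n=p_nfp_n-p_nf_np_n$ as a difference of positive operators and applying this bound to each term gives $\|M_{k,n}b_n\|_2\lesssim 2^{k-n}\|\mathsf{E}_n(p_nfp_n)\|_2=2^{k-n}\|p_nf_np_n\|_2$, after which Cuculescu's bound $p_nf_np_n\lesssim\lambda p_n$ yields precisely the $v_n=p_n$ scaling you were aiming for. So the missing ingredient is not a sharper use of $\zeta$ but the passage to the conditional expectation $\mathsf{E}_n$ via positivity, which you already deploy correctly for the tame piece $p_nf_np_n$ but fail to notice also controls $p_nfp_n$.
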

\begin{proof}
By Chebychev's inequality,
$$\widetilde{\varphi} \Big(|\zeta Tb_{d}\zeta|>\frac{\lambda}{16}
\Big)\lesssim\frac{\|\zeta Tb_{d}\zeta\|^{2}_{L_{2}(L_{\infty}(\Omega)\overline{\otimes}\mathcal N)}}{\lambda^{2}}.$$
Therefore, it suffices to show
\begin{eqnarray}\label{9}
\|\zeta Tb_{d}\zeta\|^{2}_{L_{2}(L_{\infty}(\Omega)\overline{\otimes}\mathcal N)}\lesssim\lambda^{2}\sum_{n\in\Z}\|p_{n}\|^{2}_{2},
\end{eqnarray}
since $p_{n}=0$ for $n\leq0$ and  $$\sum_{n\in\Z}\|p_{n}\|^{2}_{2}=\sum_{n=1}^{\infty}\|p_{n}\|_{1}\lesssim\frac{\|f\|_{1}}{\lambda}.$$ To estimate (\ref{9}), we first note that the orthogonality of $\varepsilon_{k}$ implies
\begin{eqnarray*}\label{333}
\|\zeta Tb_{d}\zeta\|^{2}_{L_{2}(L_{\infty}(\Omega)\overline{\otimes}\mathcal N)}=
\sum_{k}
\|\zeta \, T_k \hskip-1pt b_d \, \zeta
\|^{2}_{2}=\sum_{k}\|
\zeta \, (M_k-\mathsf{E}_k) \hskip-1pt b_d \, \zeta\|^{2}_{2}.
\end{eqnarray*}
Thus (\ref{9}) is equivalent to
\begin{eqnarray}\label{bad}
\sum_{k}\|
\zeta(M_k-\mathsf{E}_k)b_{d}\zeta\|^{2}_{2}\lesssim\lambda^{2}\sum_{n\in\Z}\|p_{n}\|^{2}_{2}.
\end{eqnarray}
If we set $b_{n}$ to be $p_n
(f-f_n) p_n$, then $b_{d}=\sum\limits_{n=1}^{\infty}b_{n}$. To prove (\ref{bad}), taking $S_{k}h=\zeta(M_k-\mathsf{E}_k)h\zeta$, $u_{n}=b_{n}$ and $v_{n}=p_{n}$ in Lemma \ref{L1}, we are reduced to showing
\begin{eqnarray}\label{bad1}
\|\zeta(M_k-\mathsf{E}_k)b_{n}\zeta\|^{2}_{2}\lesssim2^{-2|k-n|}\lambda^{2}\|p_{n}\|^{2}_{2}.
\end{eqnarray}

First, we claim that for $k\geq n$,
\begin{align}\label{key ob}
\zeta(x) (M_k-\mathsf{E}_k) b_{n}(x)\zeta(x)=0,\;\forall x\in\R^{d}.
\end{align}
Indeed,  by the cancellation property---Lemma \ref{L5}~(2), we have
$$\zeta(x)M_{k}b_{n}(x)\zeta(x)=\zeta(x)\frac1{|B_{k}|}\int_{x+B_{k}}b_n(y)\1_{y\notin 5Q_{x,n}}dy=0,$$
since $x+B_{k}\subset 5Q_{x,n}$; similarly
$$
\zeta(x)\mathsf{E}_k b_{n}(x)\zeta(x)=\zeta(x)\frac{1}{|Q_{x,k}|} \int_{Q_{x,k}}b_{n}(y)\1_{y\notin 5Q_{x,n}}dy\zeta(x)=0,
$$
since $Q_{x,k}\subset Q_{x,n}$.

Now we turn to the proof of \eqref{bad1} in the case of $k<n$. Observe that $\mathsf{E}_k b_{n}(x)=\mathsf{E}_k\mathsf{E}_nb_{n}(x)=0$ follows from the cancellation property of $b_n$---Lemma \ref{L5}~(1). Thus it suffices to show in the present case
\begin{eqnarray}\label{0001}
\|M_k b_{n}\|_{2}\lesssim2^{k-n}\lambda\|p_{n}\|_{2}.
\end{eqnarray}
By the cancellation property of $b_n$, it is easy to check that
$$M_kb_n=M_{k,n}b_n$$
where $M_{k,n}$ was defined in \eqref{mkn}. Furthermore, write $b_n=p_nfp_n-p_n
f_n p_n$ as the difference of two positive operators. Then by the triangle inequality, Lemma \ref{kn} and the fact that $\mathsf{E}_n(p_{n}fp_{n})=p_{n}f_{n}p_{n}$, we get
\begin{align*}
\|M_k b_{n}\|_{2}&=\|M_{k,n} b_{n}\|_{2}\lesssim 2^{k-n}\|p_nf_np_n\|_2.
\end{align*}
Noting that from Cuculescu's construction, one has $p_{n}f_{n}p_{n}\lesssim\lambda p_n$ (see also \cite[Page 561]{JP1}), which
 gives
 $$\|p_nf_np_n\|_2\lesssim \lambda \|p_n\|_2.$$
 This yields the desired estimate \eqref{0001}.
\end{proof}

\begin{rk}
\emph{We should point out that the above method based on a 2-norm estimate seems no longer applicable to the off-diagonal term of $b$, as we have seen that the diagonal estimate and the positivity play important roles in the above argument, while the off-diagonal part of $b$ does not enjoy these properties.}
\end{rk}

\subsubsection{Estimate for $Tb_\mathit{off}$}
Let us now consider the off-diagonal term
$b_\mathit{off}$ . As for $Tb_{d}$, it suffices to estimate $\zeta Tb_{\mathit{off}}\zeta$.

\begin{prop}\label{C11}
The following estimate holds true,
$$\lambda\widetilde{\varphi} \Big(|\zeta Tb_{\mathit{off}}\zeta|>\frac{\lambda}{16}
\Big)\lesssim\|f\|_{1}.$$
\end{prop}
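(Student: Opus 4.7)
The plan is to mimic the strategy of Proposition~\ref{C8}. Chebychev's inequality gives
$$\widetilde{\varphi}\Big(|\zeta T b_{\mathit{off}}\zeta|>\frac{\lambda}{16}\Big)\lesssim \frac{\|\zeta T b_{\mathit{off}}\zeta\|_2^2}{\lambda^2},$$
and the orthogonality of the Rademacher sequence reduces the task to proving $\sum_k\|\zeta T_k b_{\mathit{off}}\zeta\|_2^2\lesssim\lambda\|f\|_1$. As flagged in the remark after Proposition~\ref{C8}, $b_{\mathit{off}}$ is neither diagonal nor positive, so that argument does not carry over verbatim; the cancellations of $\zeta$, of $T_k$, and of the various pieces of $b_{\mathit{off}}$ must instead be matched at the appropriate dyadic scales.

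Inspired by Lemma~\ref{r2}, I would decompose $b_{\mathit{off}}=\sum_{s\geq 1}(B_s^\ell+B_s^r)$, where $B_s^\ell=\sum_n b_{n,n+s}$ with $b_{n,n+s}=p_n(f-f_{n+s})p_{n+s}$, and $B_s^r=(B_s^\ell)^*$ by self-adjointness of $f$. Since the kernels of $M_k$ and $\mathsf{E}_k$ and the Rademacher variables are all real, $T$ intertwines the $*$-operation, and as $\zeta=\zeta^*$ we have $\|\zeta T B_s^r\zeta\|_2=\|\zeta T B_s^\ell\zeta\|_2$. Hence it is enough to estimate $\|\zeta T B_s^\ell\zeta\|_2^2$ for each $s\geq 1$ with a geometric decay in $s$ that can be summed.

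For fixed $s$, I plan to apply the almost orthogonality principle (Lemma~\ref{L1}) with $S_k h=\zeta T_k h\zeta$ and $u_n=v_n=b_{n,n+s}$. The case analysis of $\|S_k b_{n,n+s}\|_2$ runs parallel to Proposition~\ref{C8}. When $k\geq n$, the inclusions $x+B_k\subseteq 5Q_{x,n}$ and $Q_{x,k}\subseteq 5Q_{x,n}$, combined with the cancellation of Lemma~\ref{L5}(2) (with $i\wedge j=n$), give $\zeta M_k b_{n,n+s}\zeta=\zeta\mathsf{E}_k b_{n,n+s}\zeta=0$, so $S_k b_{n,n+s}=0$. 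When $k<n$, the cancellation of Lemma~\ref{L5}(1) with $Q\in\Q_{n+s}$ yields $\mathsf{E}_k b_{n,n+s}=0$, so $S_k b_{n,n+s}=\zeta M_k b_{n,n+s}\zeta$; the same cancellation at scale $n+s$ allows one to replace $M_k b_{n,n+s}$ by $M_{k,n+s}b_{n,n+s}$ from \eqref{mkn}, and Lemma~\ref{kn} then yields $\|S_k b_{n,n+s}\|_2\lesssim 2^{k-n-s}\|b_{n,n+s}\|_2$. Choosing $\sigma(j)=2^{-s-j}\1_{j>0}$ in Lemma~\ref{L1} (so $w=2^{-s}$) produces
$$\sum_k\|\zeta T_k B_s^\ell\zeta\|_2^2\lesssim 2^{-2s}\sum_n\|b_{n,n+s}\|_2^2.$$

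The hard step, and what I expect to be the principal obstacle, is the uniform-in-$s$ $L_2$ bound $\sum_n\|b_{n,n+s}\|_2^2\lesssim\lambda\|f\|_1$; this cannot be deduced from $p_kf_kp_k\lesssim\lambda p_k$ alone, because the column/row structure of $b_{n,n+s}$ survives after taking $L_2$-norms. To prove it I would expand $b_{n,n+s}=p_n\sum_{m>n+s}df_m\,p_{n+s}$, use martingale orthogonality in $m$ inside the trace, and exploit $p_{n+s}\leq q_{n+s-1}$ together with $p_n\leq \1_{\mathcal N}-q_{n-1}$ to rewrite the resulting double sum in a form controlled by Cadilhac's estimate in Lemma~\ref{r2}, with the trace bounds $\sum_n\tau(p_n),\ \sum_n\tau(p_{n+s})\leq \|f\|_1/\lambda$ supplying the decisive factor $\lambda$. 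Granted this estimate, summing over $s\geq 1$ gives
$$\sum_k\|\zeta T_k b_{\mathit{off}}\zeta\|_2^2\lesssim\sum_{s\geq 1}2^{-2s}\lambda\|f\|_1\lesssim\lambda\|f\|_1,$$
and Chebychev concludes the proof.
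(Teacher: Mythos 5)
Your reduction (Chebychev to an $L_2$ norm, orthogonality of $(\varepsilon_k)$, almost orthogonality via Lemma~\ref{L1}) and the case analysis---vanishing for $k\geq n$ by Lemma~\ref{L5}(2), decay $2^{k-n-s}$ for $k<n$ via $M_kb_{n,n+s}=M_{k,n+s}b_{n,n+s}$ and Lemma~\ref{kn}---are all sound, and they mirror the structure of Proposition~\ref{C8}. But the step you correctly single out as the principal obstacle, namely the uniform-in-$s$ bound $\sum_n\|b_{n,n+s}\|_2^2\lesssim\lambda\|f\|_1$, is exactly where the $L_2$ route collapses, and this is precisely what the remark after Proposition~\ref{C8} is warning about. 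For $b_d$ the trick is that $p_nfp_n$ and $p_nf_np_n$ are both positive, so the second half of Lemma~\ref{kn} reduces everything to $\mathsf{E}_n(p_nfp_n)=p_nf_np_n$, which Cuculescu's construction bounds by $\lambda p_n$. For $b_{n,n+s}=p_nfp_{n+s}-p_nf_{n+s}p_{n+s}$ neither rectangle is positive, Cuculescu gives no operator bound on $p_nf_{n+s}p_{n+s}$, and the positivity half of Lemma~\ref{kn} is unavailable. Your sketch (martingale orthogonality in $m$ plus Lemma~\ref{r2}) does not rescue this: Lemma~\ref{r2} controls $g^\ell_s=\sum_k p_kdf_{k+s}q_{k+s-1}$, where the \emph{cumulative} projection $q_{k+s-1}$ supplies the needed $L_\infty$ control, whereas in $b_{n,n+s}=\sum_{m>n+s}p_ndf_mp_{n+s}$ the right projection $p_{n+s}$ is actually \emph{orthogonal} to $q_{m-1}$ for all $m>n+s$, so there is no comparison. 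Pushing the natural estimates through (expand in $m$, drop $p_{n+s}$, sum $\sum_{n<m-s}p_n=\mathbf{1}-q_{m-s-1}$) only gives $\sum_n\|b_{n,n+s}\|_2^2\leq\|f\|_2^2$, which is not $\lesssim\lambda\|f\|_1$: the bad function simply has no $L_2$ control, which is the whole point of the Calder\'on--Zygmund split.

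The paper avoids this by never leaving $L_1$: Chebychev gives $\lambda\widetilde{\varphi}(|\zeta Tb_{\mathit{off}}\zeta|>\lambda/16)\lesssim\|\zeta Tb_{\mathit{off}}\zeta\|_{L_1(L_\infty(\Omega)\overline{\otimes}\mathcal N)}$, Minkowski bounds this by $\sum_{s,n,k}\|\zeta(M_k-\mathsf{E}_k)b_{n,s}\zeta\|_1$, the terms with $k\geq n$ vanish and $\mathsf{E}_kb_{n,s}=0$ for $k<n$ exactly as you observed, and the first half of Lemma~\ref{kn} (which needs no positivity) gives $\|M_kb_{n,s}\|_1\lesssim 2^{-s}2^{k-n}\|b_{n,s}\|_1$. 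The closing ingredient is Cadilhac's $L_1$ estimate $\sum_n\|b_{n,s}\|_1\lesssim\|f\|_1$ from Lemma~3.7 of~\cite{C}; summing the geometric factors in $k<n$ and $s$ finishes the proof. The almost orthogonality principle is not needed here---triangle inequality in $L_1$ already produces a convergent double-geometric series---and trying to force the $L_2$ square-function argument onto $b_{\mathit{off}}$ is the one place where the $b_d$ template should not be imitated.
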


\begin{proof}
Rewrite $b_{\mathit{off}}$ as
$$b_\mathit{off}=\sum\limits_{s=1}^{\infty}\sum\limits_{n=1}^{\infty}b_{n,s},$$ where
$$b_{n,s}
= p_n (f - f_{n+s}) p_{n+s} + p_{n+s} (f - f_{n+s}) p_n.$$
By the Chebychev and Minkowski inequalities, one gets
\begin{align*}
\lambda\widetilde{\varphi} \Big(|\zeta Tb_{\mathit{off}}\zeta|>\frac{\lambda}{16}
\Big)
& \lesssim\|\zeta T
b_{\mathit{off}} \zeta\|_{L_{1}(L_{\infty}(\Omega)\overline{\otimes}\mathcal N)}\\
& \leq\sum_{s=1}^{\infty}
\sum_{n=1}^{\infty}\|\zeta  \sum_{k}\varepsilon_{k}T_k
b_{n,s} \zeta\|_{L_{1}(L_{\infty}(\Omega)\overline{\otimes}\mathcal N)}\\
& \leq \sum_{s=1}^\infty\sum_{n=1}^\infty\sum_{k}\Big\|\zeta (M_k-\mathsf{E}_k)  b_{n,s} \, \zeta
\Big\|_{1}.
\end{align*}

On the one hand, the same argument as for claim \eqref{key ob} yields $\zeta (M_k-\mathsf{E}_k)  b_{n,s} \, \zeta=0$ for any $k\geq n$; on the other hand, for $k<n$, $\mathsf{E}_k b_{n,s}=0$ follows from the cancellation property of $b_{n,s}$ in Lemma \ref{L5}~(1). Thus to complete the proof of Proposition \ref{C11}, it suffices to show
\begin{align}\label{deboff}
\sum_{s=1}^\infty\sum_{n=1}^\infty\sum_{k:k< n}\| M_k b_{n,s} \|_1\lesssim \|f\|_1.
\end{align}
Applying again the cancellation property of $b_{n,s}$, we have $M_k b_{n,s}=M_{k, n+s}b_{n,s}$. Thus by Lemma \ref{kn},
$$\| M_k b_{n,s} \|_1\lesssim 2^{-s}2^{k-n}\|b_{n,s}\|_1;$$ note that the following estimate has been obtained in Lemma 3.7 of \cite{C}, for all $s\geq1$, we have
$$\sum\limits_{n=1}^{\infty}\|b_{n,s}\|_{1}\lesssim\|f\|_{1}.$$
Therefore, by Fubini's theorem,
\begin{align*}
\sum_{s=1}^\infty\sum_{n=1}^\infty\sum_{k:k< n}\big\| M_k \hskip-1pt b_{n,s} \big\|_1
& \lesssim\sum_{s=1}^\infty\sum_{n=1}^\infty\sum_{k:k< n}2^{-s}\cdot2^{k-n}\|b_{n,s}\|_{1}\\
& \lesssim\sum_{s=1}^\infty\sum_{n=1}^\infty2^{-s}\|b_{n,s}\|_{1}\\
& \lesssim\sum_{s=1}^\infty2^{-s}\|f\|_{1}\lesssim\|f\|_{1}.
\end{align*}
This gives estimate \eqref{deboff} and thus completes the argument for  $Tb_{\mathit{off}}$.
\end{proof}

\begin{remark}
\emph{With hindsight, the arguments in dealing with the off-diagonal bad part can be used to handle the diagonal bad part. But we prefer to give the previous proof based on 2-norm estimates for the latter part since we believe that this approach should be more powerful and have further applications as in the case of classical harmonic analysis.}
\end{remark}

\subsection{Estimates for the good function}
\subsubsection{Estimate for $Tg_d$}
To handle $g_d$, we need the fact that $T$ is bounded from $L_{2}(\mathcal N)$ to $L_2(L_{\infty}(\Omega)\overline{\otimes}\mathcal{N})$, which will be proved in the Appendix.
\begin{lem}\label{t1}
Let $h\in L_{2}(\mathcal N)$. Then there exists a constant $C_{d}$ depending only on the dimension
$d$ such that
$$\|Th\|_{L_2(L_{\infty}(\Omega)\overline{\otimes}\mathcal{N})}\leq C_{d}\|h\|_{2}.$$
\end{lem}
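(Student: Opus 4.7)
The plan is to exploit the orthogonality of the Rademacher sequence $(\varepsilon_{k})$ in $L_{2}(L_{\infty}(\Omega)\overline{\otimes}\mathcal N)=L_{2}(\Omega;L_{2}(\mathcal N))$, which gives
\begin{equation*}
\|Th\|_{L_{2}(L_{\infty}(\Omega)\overline{\otimes}\mathcal N)}^{2} \;=\; \sum_{k}\|T_{k}h\|_{2}^{2},
\end{equation*}
reducing the problem to the square function estimate $\sum_{k}\|T_{k}h\|_{2}^{2}\lesssim\|h\|_{2}^{2}$. Since $M_{k}$ and $\mathsf{E}_{k}$ act only in the spatial variable and $L_{2}(\mathcal N)\cong L_{2}(\R^{d};L_{2}(\M))$ is already a Hilbert space, the non-commutative component plays no role beyond supplying the target; the whole argument is an almost verbatim transcription of a classical computation.

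I would apply the almost orthogonality principle, Lemma \ref{L1}, to the dyadic martingale decomposition $h=\sum_{n\in\Z}\Delta_{n}h$, for which $\sum_{n}\|\Delta_{n}h\|_{2}^{2}=\|h\|_{2}^{2}$. Taking $S_{k}=T_{k}$ and $u_{n}=v_{n}=\Delta_{n}h$, matters are reduced to producing a summable sequence $\sigma$ with
\begin{equation*}
\|T_{k}\Delta_{n}h\|_{2} \;\leq\; \sigma(n-k)\,\|\Delta_{n}h\|_{2}, \qquad n,k\in\Z.
\end{equation*}

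I would split the analysis according to the sign of $n-k$. When $n>k$, martingale projectivity forces $\mathsf{E}_{k}\Delta_{n}h=0$, so $T_{k}\Delta_{n}h=M_{k}\Delta_{n}h$; since $\Delta_{n}h$ has vanishing average on every cube of $\Q_{n-1}$, the cubes entirely contained in $x+B_{k}$ contribute nothing, and therefore $M_{k}\Delta_{n}h = M_{k,n-1}\Delta_{n}h$ in the notation of \eqref{mkn}. Lemma \ref{kn} then delivers the geometric decay $\|T_{k}\Delta_{n}h\|_{2}\lesssim 2^{-(n-k)}\|\Delta_{n}h\|_{2}$. When $n\leq k$, $\Delta_{n}h$ is already $\sigma_{k}$-measurable, hence $\mathsf{E}_{k}\Delta_{n}h=\Delta_{n}h$ and
\begin{equation*}
T_{k}\Delta_{n}h(x) \;=\; \frac{1}{|B_{k}|}\int_{(x+B_{k})\setminus Q_{x,n}}\bigl(\Delta_{n}h(y)-\Delta_{n}h(x)\bigr)\,dy,
\end{equation*}
which is non-zero only when $x$ lies within $2^{-k}$ of the boundary of its $\sigma_{n}$-cube. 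This exceptional set $A_{k,n}$ has relative density $\lesssim 2^{n-k}$ inside each cube of $\Q_{n}$; an operator-valued Cauchy--Schwarz inequality in $y$ combined with Fubini then yields $\|T_{k}\Delta_{n}h\|_{2}^{2}\lesssim 2^{n-k}\|\Delta_{n}h\|_{2}^{2}$, i.e.\ $\|T_{k}\Delta_{n}h\|_{2}\lesssim 2^{-(k-n)/2}\|\Delta_{n}h\|_{2}$.

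Combining the two cases gives $\sigma(j)\lesssim 2^{-|j|/2}$, which is summable over $\Z$, and Lemma \ref{L1} supplies the conclusion. The main obstacle is the case $n\leq k$: unlike the case $n>k$, the operator $T_{k}$ does not annihilate $\Delta_{n}h$, and one must extract cancellation purely from the geometric observation that $M_{k}$ coincides with the identity on functions locally constant at scales coarser than $2^{-k}$ except inside a thin boundary layer. Quantifying this boundary contribution is the technical heart of the proof, and is also what forces the weaker decay exponent $1/2$ there, as opposed to the full exponent $1$ available when $n>k$.
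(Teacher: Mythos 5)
Your proposal is correct and follows essentially the same route as the paper's Appendix proof: both reduce via Rademacher orthogonality to a square-function estimate, apply the almost orthogonality principle (Lemma \ref{L1}) to the martingale-difference decomposition $h=\sum_n dh_n$, and establish the off-diagonal decay $\|T_k dh_n\|_2\lesssim 2^{-|n-k|/2}\|dh_n\|_2$ by treating the cases $n>k$ (where $\mathsf{E}_k dh_n=0$ and $M_k dh_n=M_{k,n-1}dh_n$ by cancellation on $\Q_{n-1}$-atoms, then Lemma \ref{kn}) and $n\le k$ (where $\mathsf{E}_k dh_n=dh_n$ and only a thin boundary layer of relative density $\lesssim 2^{n-k}$ contributes). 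The only difference is cosmetic: in the case $n\le k$ you organize the boundary-layer estimate via Cauchy--Schwarz and Fubini, while the paper uses the pointwise bound $\|M_k dh_n(x)-dh_n(x)\|\le 2m_H$ with $m_H$ the maximum over $H$ and its neighbors in $\Q_n$ before integrating over the thin set.
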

\begin{prop}\label{C12}
The following estimate holds true,
$$\lambda\widetilde{\varphi}\ \Big(|Tg_{d}|>\frac{\lambda}{16}
\Big)\lesssim\|f\|_{1}.$$
\end{prop}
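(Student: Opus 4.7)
The plan is to handle $g_d$ via a straightforward $L_2$--argument, which is now possible because $g_d$ has a good $L_2$-estimate from the Calder\'on--Zygmund decomposition, while the operator $T$ is already known to be bounded on $L_2$. The strategy is the standard ``good part goes through $L_2$'' route.

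First I would apply Chebychev's inequality in the noncommutative $L_{1,\infty}$-setting on $L_\infty(\Omega)\overline{\otimes}\mathcal N$ to reduce the distributional estimate to an $L_2$-norm bound:
\begin{equation*}
\widetilde{\varphi}\Big(|Tg_d|>\frac{\lambda}{16}\Big) \;\lesssim\; \frac{\|Tg_d\|^2_{L_2(L_\infty(\Omega)\overline{\otimes}\mathcal N)}}{\lambda^2}.
\end{equation*}
Then I would invoke Lemma \ref{t1}, the $L_2$-boundedness of $T$ with values in $L_2(L_\infty(\Omega)\overline{\otimes}\mathcal N)$, to get
\begin{equation*}
\|Tg_d\|^2_{L_2(L_\infty(\Omega)\overline{\otimes}\mathcal N)} \;\lesssim\; \|g_d\|_2^2.
\end{equation*}

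Finally I would apply the diagonal estimate recorded in Lemma \ref{r1}, namely $\|g_d\|_2^2 \le 2^d\lambda\,\|f\|_1$, to conclude
\begin{equation*}
\lambda\,\widetilde{\varphi}\Big(|Tg_d|>\frac{\lambda}{16}\Big) \;\lesssim\; \lambda\cdot\frac{\lambda\|f\|_1}{\lambda^2} \;=\; \|f\|_1.
\end{equation*}

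There is no serious obstacle here: unlike the bad parts, $g_d$ does not require the projection $\zeta$ or any cancellation/almost-orthogonality argument, because it is already square-integrable with the correct quantitative bound. The whole proposition is essentially a one-line chain: Chebychev, then $L_2$-boundedness of $T$, then the classical CZ diagonal bound. The only thing one should keep in mind is that Lemma \ref{t1} is applied in the tensor product setting (with the Rademacher variables), but this is exactly the form stated there, so no additional work is needed.
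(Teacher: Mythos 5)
Your proof is correct and follows essentially the same route as the paper: Chebychev, the $L_2$-boundedness of $T$ from Lemma \ref{t1}, and then the $L_2$-estimate $\|g_d\|_2^2\lesssim\lambda\|f\|_1$. The only cosmetic difference is that the paper arrives at this last bound via H\"older's inequality $\|g_d\|_2^2\le\|g_d\|_1\|g_d\|_\infty$ (together with $\|g_d\|_1\lesssim\|f\|_1$ and $\|g_d\|_\infty\lesssim\lambda$), whereas you quote the estimate from Lemma \ref{r1} directly; these are the same ingredient.
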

\begin{proof}
Together with the diagonal estimate given in Lemma \ref{r1}, Chebychev's inequality and H\"{o}lder's inequality yield
$$\widetilde{\varphi}\ \Big(|Tg_{d}|>\frac{\lambda}{16}
\Big)\lesssim\frac{\|Tg_{d}\|_{L_{2}(L_{\infty}(\Omega)\overline{\otimes}\mathcal N)}}{\lambda^{2}}
\lesssim\frac{\|g_{d}\|_{2}^{2}}{\lambda^{2}}\leq\frac{\|g_{d}\|_{1}\|g_{d}\|_{\infty}}{\lambda^{2}}
\lesssim\frac{\|f\|_{1}}{\lambda}.$$
This completes the proof of our assertion for $Tg_{d}$.
\end{proof}
\subsubsection{A pseudo-localization result}
As mentioned before, to handle the off-diagonal term of $g$, we need to establish a \emph{pseudo-localization principle} in our case. We state this principle as the following theorem.
\begin{thm} \label{t2}
Let $h \in L_{2}(\mathcal N)$ and $s\geq1$ be an integer. For all $k\in \Z$, let $A_k$ and $B_k$ be projections in $\mathcal N_k$ such that $A_k^{\bot}dh_{k+s}=dh_{k+s}B_k^{\bot}=0$. Write:
\begin{center}
$A_k = \bigvee\limits_{{Q\in \Q_k}} A_Q\1_Q$, $A_Q \in \M_\pi$ and define $5A_k= \bigvee\limits_{Q\in \Q_k} A_Q\1_{5Q}$,
\end{center}
Define $5B_k$ in the same way. Let
\begin{align}\label{2}
A_{h,s}= \bigvee_{k \in \Z} 5A_k\ \mbox{and}\ B_{h,s}= \bigvee_{k \in \Z} 5B_k.
\end{align}
Then we have
\begin{center}
$\|A_{h,s}^{\bot}Th\|_{L_{2}(L_{\infty}(\Omega)\overline{\otimes}\mathcal N)} \lesssim 2^{-\frac{s}{2}}\|h\|_{2}\ \mbox{and}\ \|(Th)B_{h,s}^{\bot}\|_{L_{2}(L_{\infty}(\Omega)\overline{\otimes}\mathcal N)} \lesssim 2^{-\frac{s}{2}}\|h\|_{2}.$
\end{center}
\end{thm}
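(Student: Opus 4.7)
The plan is to prove the column estimate $\|A_{h,s}^\perp Th\|_{L_2(L_\infty(\Omega)\overline{\otimes}\mathcal N)}\lesssim 2^{-s/2}\|h\|_2$; the row estimate follows by the same argument with the $B_k$ acting on the right. Since the Rademacher variables $\varepsilon_k$ are orthonormal in $L_2(\Omega)$, integrating out $\Omega$ first collapses the norm to a column square function,
\begin{equation*}
\|A_{h,s}^\perp Th\|_{L_2(L_\infty(\Omega)\overline{\otimes}\mathcal N)}^2 \, = \, \sum_k\|A_{h,s}^\perp T_k h\|_2^2.
\end{equation*}
So it is enough to bound this sum by $2^{-s}\|h\|_2^2$. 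I will next expand $h=\sum_{n\in\Z}dh_n$ as a series of dyadic martingale differences.

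The heart of the argument is a geometric cancellation obtained by reindexing the hypothesis. The condition $A_k^\perp dh_{k+s}=0$, applied with $k'=n-s$, tells us that \emph{every} martingale difference satisfies $A_{n-s}^\perp dh_n=0$, hence $dh_n=A_{n-s}dh_n$. The key geometric claim is that for any $n\leq k+s$ and any $y\in x+B_k$ one has $x\in 5Q_{y,n-s}$; indeed, using $2^{-k}\leq 2^{s-n}$ (which is exactly the assumption $n\leq k+s$),
\begin{equation*}
|x-c_{Q_{y,n-s}}|_\infty \, \leq \, |x-y|_\infty+|y-c_{Q_{y,n-s}}|_\infty \, \leq \, 2^{-k}+\tfrac12\cdot 2^{s-n} \, \leq \, \tfrac52\cdot 2^{s-n},
\end{equation*}
uniformly in the dimension $d$. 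Consequently $A_{n-s}(y)\leq 5A_{n-s}(x)\leq A_{h,s}(x)$, which forces $A_{h,s}^\perp(x)A_{n-s}(y)=0$ for every such $y$. Combined with $dh_n=A_{n-s}dh_n$, this implies both $A_{h,s}^\perp M_k dh_n=0$ and (when $n\leq k$) $A_{h,s}^\perp dh_n=0$. Recalling that $\mathsf{E}_k dh_n=0$ for $n>k$ and $\mathsf{E}_k dh_n = dh_n$ for $n\leq k$, I obtain $A_{h,s}^\perp T_k dh_n=0$ for every $n\leq k+s$.

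Only the tail $n>k+s$ remains, so $A_{h,s}^\perp T_k h=\sum_{n>k+s}A_{h,s}^\perp M_k dh_n$. For such $n$ the mean-zero property of $dh_n$ on $\Q_{n-1}$-cubes gives $M_k dh_n=M_{k,n-1}dh_n$, and Lemma \ref{kn} supplies $\|M_k dh_n\|_2\lesssim 2^{k-n}\|dh_n\|_2$. A Cauchy-Schwarz inequality with the weight $2^{k-n}$ and $\sum_{n>k+s}2^{k-n}\lesssim 2^{-s}$ then yields
\begin{equation*}
\|A_{h,s}^\perp T_k h\|_2^2 \, \lesssim \, 2^{-s}\sum_{n>k+s}2^{k-n}\|dh_n\|_2^2.
\end{equation*}
Swapping the order of summation (so that each fixed $n$ contributes $\sum_{k<n-s}2^{k-n}\lesssim 2^{-s}$) together with the martingale orthogonality $\sum_n\|dh_n\|_2^2=\|h\|_2^2$ produces the bound $\sum_k\|A_{h,s}^\perp T_k h\|_2^2\lesssim 2^{-2s}\|h\|_2^2$, which is in fact stronger than the claimed $2^{-s/2}$ rate.

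The main obstacle is the geometric matching step: one must verify that the enlargement factor $5$ in the definition of $A_{h,s}$ accommodates a Euclidean ball of radius $2^{-k}$ inside $5Q$ for any $Q\in\Q_m$ of side $2^{-m}\geq 2^{-k}$ meeting the ball, \emph{uniformly in $d$}. Once this is pinned down, the rest is a short application of Cauchy-Schwarz and Fubini. The alternative of appealing directly to the almost-orthogonality principle of Lemma \ref{L1} (with $u_n=v_n=dh_n$ and $S_k=A_{h,s}^\perp T_k$) also works, since the estimates above produce $\sigma(j)=0$ for $j\leq s$ and $\sigma(j)\lesssim 2^{-j}$ for $j>s$, whose total is $O(2^{-s})$.
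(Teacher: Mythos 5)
Your proof is correct and takes essentially the same route as the paper's: expand $h$ into dyadic martingale differences, use the $5$-dilation geometry to show that $A_{h,s}^\perp T_k dh_n=0$ whenever $n\leq k+s$, control the remaining tail $n>k+s$ via $M_k dh_n=M_{k,n-1}dh_n$ and Lemma \ref{kn}, and sum by a weighted Cauchy--Schwarz/Fubini argument, which is precisely the content of the almost-orthogonality principle Lemma \ref{L1} that the paper invokes. You even obtain the marginally sharper rate $2^{-s}$ in place of the paper's $2^{-s/2}$ (the paper deliberately relaxes $2^{k-n}\lesssim 2^{-s/2}\,2^{(k-n)/2}$ so as to fit the symmetric weight $\sigma(j)\sim 2^{-s/2}2^{-|j|/2}$ required by Lemma \ref{L1}), which makes no difference for the subsequent application in Proposition \ref{C10}.
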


\begin{proof}
We will only deal with the case involving $A_{h,s}$ since the exact same proof works for another case. Let $h=\sum\limits_{n\in\Z}dh_{n}$, where $dh_{n}$ is the martingale difference of $h$. Then
$$\sum_{n\in\Z}\|dh_{n}\|_{2}^{2}=\|h\|_{2}^{2}.$$
Applying the orthogonality of $\varepsilon_{k}$, we have
$$\|A^{\perp}_{h,s}Th\|^{2}_{L_{2}(L_{\infty}(\Omega)\overline{\otimes}\mathcal N)}=\sum_{k}\|A^{\perp}_{h,s}
(M_{k}-\mathsf{E}_{k})\sum_{n}dh_{n}\|_{2}^{2}.$$
Moreover, Lemma \ref{L1} implies that in order to finish the proof of Theorem \ref{t2}, it is enough to show
\begin{align}\label{88}
\|A^{\perp}_{h,s}(M_{k}-\mathsf{E}_k)dh_{n}\|^{2}_{2}\lesssim 2^{-{s}}2^{-{|n-k|}}\|dh_{n} \|^{2}_{2}.
\end{align}

First we claim that for all $k\geq n-s$,
\begin{align}\label{15}
A^{\perp}_{h,s}(x)(M_{k}-\mathsf{E}_k)dh_{n}(x)=0,\;\forall x\in\mathbb R^d.
\end{align}
The similar arguments as for claim \eqref{key ob} works here. Indeed,
from the construction of $A_{h,s}$ in (\ref{2}), it is easy to see that for all $j\in\mathbb Z$, $u,v\in\R^{d}$ with $v\in5Q_{u,j}$, we have $A_{j}(v)\leq (5A_{j})(u)\leq A_{h,s}(u)$
and thus
\begin{align}\label{8}
A^{\perp}_{h,s}(u)=A^{\perp}_{h,s}(u)(5A_{j})^{\perp}(u)=A^{\perp}_{h,s}(u)(5A_{j})^{\perp}(u)A_{j}^{\perp}(v).
\end{align}
Now let $x\in\mathbb R^d$ and $k,n$ be fixed such that $k\geq n-s$. Under this restriction, one has $x+B_{k}\subset 5Q_{x,n-s}$. Thus by \eqref{8} and the assumption $A_{n-s}^{\perp}dh_{n}=0$
\begin{align*}
  A^{\perp}_{h,s}(x)M_{k}dh_{n}(x) & =A^{\perp}_{h,s}(x)\frac{1}{|B_{k}|}
\int_{x+B_{k}}dh_{n}(y)dy\\
& =A_{h,s}^{\perp}(x)\frac{1}{|B_{k}|}
\int_{x+B_{k}}(5A_{n-s})^{\perp}(x)A^{\perp}_{n-s}(y)dh_{n}(y)dy=0.
\end{align*}
Similarly, $A^{\perp}_{h,s}(x)\mathsf{E}_{k}dh_{n}(x)=0$ and thus claim \eqref{15} is verified.

It remains to show \eqref{88} for $k<n-s$. First of all, one has trivially $\mathsf{E}_kdh_{n}=\mathsf{E}_k\mathsf{E}_ndh_{n}=0$. Using the cancellation property of $dh_n$ over all atoms in $\Q_{n-1}$, one gets $M_kdh_n=M_{k,n-1}dh_n$. Noting that $k<n-s\leq n-1$, thus by Lemma \ref{kn},
\begin{align*}
\|A^{\perp}_{h,s}M_{k}dh_{n}\|_{2}\lesssim 2^{k-n}\|dh_n\|_2\lesssim 2^{-\frac s2}2^{\frac{k-n}2}\|dh_n\|_2.
\end{align*}
Therefore, estimate \eqref{88} is completely verified and thus the proof of Theorem \ref{t2} is complete.
\end{proof}

\subsubsection{Estimate for $Tg_\mathit{off}$}
We now consider the off-diagonal term $g_\mathit{off}$. As for $Tb_d$, we decompose
the term $T g_{\mathit{off}}$ into four parts
$$(\1_\mathcal{N} - \zeta) T g_\mathit{off} (\1_\mathcal{N} - \zeta) + \zeta
\hskip1pt T g_\mathit{off} (\1_\mathcal{N} - \zeta) + (\1_\mathcal{N} - \zeta) T
g_\mathit{off} \zeta + \zeta \hskip1pt T g_\mathit{off}
\zeta,$$ where $\zeta$ is the projection constructed in Lemma
\ref{L3} and it suffices to show the corresponding estimate for the last part.

\begin{prop}\label{C10}
The following estimate holds true,
$$\lambda\widetilde{\varphi}\ \Big(|\zeta Tg_\mathit{off} \zeta|>\frac{\lambda}{16}
\Big)\lesssim\|f\|_{1}.$$
\end{prop}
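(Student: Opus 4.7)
The approach is to apply the pseudo-localization principle (Theorem \ref{t2}) to each dyadic piece in the Littlewood--Paley-type decomposition of $g_\mathit{off}$ coming from Lemma \ref{r2}, extract a $2^{-s/2}$ decay in $s$, sum, and then convert the resulting $L_2$ estimate into the weak-type bound via Chebychev's inequality. Recall that by Lemma \ref{r2},
$$g_\mathit{off} \;=\; \sum_{s\ge 1}(g^{\ell}_s + g^{r}_s), \quad g^{\ell}_{s,k} \;=\; \Delta_{k+s}(g^{\ell}_s) \;=\; p_k\, df_{k+s}\, q_{k+s-1}, \quad \sup_{s\ge 1}\|g^{\ell}_s\|_2^2 \;\lesssim\; \lambda\|f\|_1,$$
together with the analogous formulas for $g^{r}_s$.

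To handle $g^{\ell}_s$, I would apply Theorem \ref{t2} with $h = g^{\ell}_s$ and $A_k := p_k$. Since $p_k \in \mathcal{N}_k$ and $p_k^{\perp} g^{\ell}_{s,k} = 0$, the required hypothesis $A_k^{\perp}\, dh_{k+s} = 0$ is verified. Comparing the construction of $5A_k$ in Theorem \ref{t2} with that of $\zeta$ in Lemma \ref{L3} gives
$$A_{h,s} \;=\; \bigvee_{k\in\Z}\bigvee_{Q\in\Q_k} p_Q\, \1_{5Q} \;=\; \bigvee_{Q\in\Q} p_Q\, \1_{5Q} \;=\; \1_{\mathcal N}-\zeta,$$
so that $A_{h,s}^{\perp} = \zeta$. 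Theorem \ref{t2} then yields
$$\|\zeta\, T g^{\ell}_s\|_{L_2(L_\infty(\Omega)\overline{\otimes}\mathcal N)} \;\lesssim\; 2^{-s/2}\,\|g^{\ell}_s\|_2 \;\lesssim\; 2^{-s/2}\sqrt{\lambda\|f\|_1}.$$
The symmetric choice $B_k = p_k$ applied to $h = g^{r}_s$ produces the dual bound $\|(T g^{r}_s)\,\zeta\|_{L_2} \lesssim 2^{-s/2}\sqrt{\lambda\|f\|_1}$.

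Using $\|\zeta\|_\infty \le 1$ and Minkowski's inequality,
$$\|\zeta\, T g_\mathit{off}\, \zeta\|_{L_2(L_\infty(\Omega)\overline{\otimes}\mathcal N)} \;\le\; \sum_{s\ge 1}\bigl(\|\zeta\, T g^{\ell}_s\|_2 + \|T g^{r}_s\, \zeta\|_2\bigr) \;\lesssim\; \sqrt{\lambda\|f\|_1}\sum_{s\ge 1} 2^{-s/2} \;\lesssim\; \sqrt{\lambda\|f\|_1}.$$
Chebychev's inequality then produces $\widetilde{\varphi}\bigl(|\zeta\, T g_\mathit{off}\, \zeta| > \tfrac{\lambda}{16}\bigr) \lesssim \|\zeta T g_\mathit{off}\zeta\|_{L_2}^2/\lambda^2 \lesssim \|f\|_1/\lambda$, which is the desired assertion.

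The main obstacle is the correct identification of the pseudo-localization data: one must recognize that the Cuculescu projections $p_k$ play the role of the abstract projections $A_k$ (and $B_k$), and that the resulting enlargement $A_{g^{\ell}_s,s}$ coincides \emph{exactly} with $\1_{\mathcal N}-\zeta$. Once this structural match is in place, Theorem \ref{t2} supplies the needed orthogonality with the decay factor $2^{-s/2}$, and the geometric summation in $s$ combined with the uniform-in-$s$ $L_2$ bound from Lemma \ref{r2} closes the argument.
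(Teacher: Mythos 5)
Your proof is correct and follows essentially the same route as the paper: decompose $g_{\mathit{off}}$ via Lemma \ref{r2}, apply the pseudo-localization Theorem \ref{t2} with $A_k=p_k$ (resp.\ $B_k=p_k$) to each $g^{\ell}_s$ (resp.\ $g^{r}_s$), identify the enlargement with $\1_{\mathcal N}-\zeta$ so that $A_{h,s}^{\perp}=\zeta$, sum the geometric decay, and finish with Chebychev. Your only organizational deviation (summing in $L_2$ before Chebychev instead of splitting the distribution first) is harmless, and your identification $A_{h,s}=\1_{\mathcal N}-\zeta$ is in fact stated more carefully than in the paper's text.
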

\begin{proof}
 Set $$g^{\ell}\triangleq\sum_{s=1}^\infty
\sum_{k=1}^\infty p_k df_{k+s} q_{k+s-1}= \sum_{s=1}^\infty \sum_{k=1}^\infty
g^{\ell}_{s,k} = \sum_{s=1}^\infty g^{\ell}_{s}$$
and
$$g^{r}\triangleq\sum_{s=1}^\infty
\sum_{k=1}^\infty  q_{k+s-1}df_{k+s} p_k= \sum_{s=1}^\infty \sum_{k=1}^\infty
g^{r}_{s,k} = \sum_{s=1}^\infty g^{r}_{s}$$
then $g_\mathit{off}=g^{\ell}+g^{r}$. Moreover, by the distribution inequality, it suffices to show
$$\lambda\widetilde{\varphi}\ \Big(|\zeta Tg^{\ell}\zeta|>\frac{\lambda}{32}
\Big)\lesssim\|f\|_{1}$$
and similar estimate for the term involving $g^{r}$.

In the following, we just deal with the term involving $g^{\ell}$ since another term can be handled similarly. By the Chebychev inequality,
\begin{align*}
\lambda\widetilde{\varphi} \Big(|\zeta Tg^{\ell}\zeta|>\frac\lambda{32}
\Big) & \lesssim  \frac{1}{\lambda}
\big\| \zeta \, T g^{\ell} \big\|_{L_{2}(L_{\infty}(\Omega)\overline{\otimes}\mathcal N)}^2 \\
& \le  \frac{1}{\lambda} \, \Big( \sum_{s=1}^\infty\big\| \zeta \, T g^{\ell}_{s} \,\big\|_{L_{2}(L_{\infty}(\Omega)\overline{\otimes}\mathcal N)} \Big)^2.
 \end{align*}
Now we are at a position to apply Theorem $\ref{t2}$. Setting $h=g^{\ell}_s$ and $A_k = p_k$ and verifying easily $dh_{k+s}=g^{\ell}_{s,k}$ and $A_{h,s}=\zeta$, we get
$$\|\zeta Tg^{\ell}_{s}\|_{L_{2}(L_{\infty}(\Omega)\overline{\otimes}\mathcal N)} \lesssim 2^{-\frac{s}{2}} \|g^{\ell}_{s}\|_2.$$
Thus
\begin{align*}
\lambda\widetilde{\varphi} \Big(|\zeta Tg^{\ell}\zeta|>\frac\lambda{32}
\Big) & \lesssim
\frac{1}{\lambda} \, \Big( \sum_{s=1}^\infty
\, 2^{-\frac{s}{2}} \,
\|g^{\ell}_{s}\|_2 \Big)^2\\& \lesssim  \frac{1}{\lambda} \, \Big( \sum_{s=1}^\infty
\, 2^{-\frac{s}{2}} \,
\sqrt{\lambda\|f\|_{1}} \Big)^2\lesssim  \|f\|_1,
\end{align*}
where the middle inequality follows from the off-diagonal estimate given in
Lemma \ref{r2}. This is the required estimate involving $g^{\ell}$ and thus finishes the proof of Proposition \ref{C10}.
\end{proof}

\subsection{Conclusion}
Combining Proposition \ref{C8}, \ref{C11}, \ref{C12}, \ref{C10} and the estimate of $\1_\mathcal N-\zeta$ before Proposition \ref{C8}, we obtain the desired weak type $(1,1)$ estimate announced in Theorem \ref{t5}.

\bigskip

At the end of this section, we are at a position to prove the two corollaries announced in the Introduction.
\begin{proof}[Proof of Corollary \ref{cor1}.]
We can write
\begin{align*}
R_{k}f(x)& =(M_{k}-M_{k-1})f(x)\\
& =(M_{k}-\mathsf{E}_{k})f(x)+(\mathsf{E}_{k}-\mathsf{E}_{k-1})f(x)+(\mathsf{E}_{k-1}-M_{k-1})f(x).
\end{align*}
The first and third terms of the above expression can be handled by conclusion (i) of Theorem \ref{t5}, and the middle term has been essentially obtained in \cite[Theorem 3.1]{Ran}. Thus, we get the desired estimate of Corollary \ref{cor1}.
\end{proof}

\bigskip

\begin{proof}[Proof of Corollary \ref{cor2}.]
Let $f\in L_1(\mathcal N)$ and $\lambda>0$.
We first decompose $M_kf$ as
$$M_kf=(M_{k}-\mathsf{E}_{k})f+\mathsf{E}_{k}f.$$
Then Cuculescu's inequality \cite{Cuc} implies that we can find a projection $e_{1}\in\mathcal{N}$ such that
$$
\sup_{k\in\Z} \big\|
e_{1} \mathsf{E}_{k}f e_{1}\big\|_{\infty} \leq
\lambda \qquad \mbox{and} \qquad \lambda\varphi \big( \1_\mathcal{N} -
e_{1} \big) \lesssim\|f\|_1.
$$

On the other hand, conclusion (i) of Theorem \ref{t5} yields that there exists a decomposition  $T_{k}f=g_{k}+h_{k}$ satisfying $$\|(g_{k})\|_{L_{1,\infty}(\mathcal{N}; \ell_{2}^{c})}+\|(h_{k})\|_{L_{1,\infty}(\mathcal{N}; \ell_{2}^{r})}\lesssim\|f\|_{1}.$$
Set $e_{2}=\chi_{(0,\lambda]}\big((\sum\limits_{k}|g_{k}|^{2})^{\frac{1}{2}}\big)$ and $e_{3}=\chi_{(0,\lambda]}\big((\sum\limits_{k}|h^{\ast}_{k}|^{2})^{\frac{1}{2}}\big)$, then
$$
\big\|\big((\sum_{k}|g_{k}|^{2})^{\frac{1}{2}}\big)
e_{2}\big\|_{\infty} \leq
\lambda \qquad \mbox{and} \qquad \lambda\varphi \big( \1_\mathcal{N} -
e_{2} \big) \lesssim\|f\|_1;
$$
and
$$
\big\|\big((\sum_{k}|h^{\ast}_{k}|^{2})^{\frac{1}{2}}\big)
e_{3}\big\|_{\infty} \leq
\lambda \qquad \mbox{and} \qquad \lambda\varphi \big( \1_\mathcal{N} -
e_{3} \big) \lesssim\|f\|_1.
$$
Let $e_{4}=e_{2}\bigwedge e_{3}$. Then
$$\lambda\varphi \big( \1_\mathcal{N} -
e_{4} \big) \lesssim\|f\|_1,$$
and
\begin{align*}
   \big\|e_{4}(M_{k}-\mathsf{E}_{k})fe_{4}\big\|_{\infty}
    \leq&\ \big\|e_{4}g_{k}e_{4}\big\|_{\infty}+\big\|e_{4}h_{k}e_{4}\big\|_{\infty}\\
    =&\ \big\|e_{4}g_{k}e_{4}\big\|_{\infty}+\big\|e_{4}h^{\ast}_{k}e_{4}\big\|_{\infty}\\
    =&\ \big\|e_{4}u_{k}|g_{k}|e_{4}\big\|_{\infty}+\big\|e_{4}v_{k}|h^{\ast}_{k}|e_{4}\big\|_{\infty}\\
    \leq&\ \big\||g_{k}|e_{4}\big\|_{\infty}+\big\||h^{\ast}_{k}|e_{4}\big\|_{\infty}\\
    =&\ \big\|e_{4}|g_{k}|^{2}e_{4}\big\|^{\frac{1}{2}}_{\infty}
    +\big\|e_{4}|h^{\ast}_{k}|^{2}e_{4}\big\|^{\frac{1}{2}}_{\infty}\\
   \leq&\ \big\|e_{4}\sum_{k}|g_{k}|^{2}e_{4}\big\|^{\frac{1}{2}}_{\infty}
   +\big\|e_{4}\sum_{k}|h^{\ast}_{k}|^{2}e_{4}\big\|^{\frac{1}{2}}_{\infty}\\
    =&\ \big\|(\sum_{k}|g_{k}|^{2})^{\frac{1}{2}}e_{2}e_{4}\big\|_{\infty}
    +\big\|(\sum_{k}|h^{\ast}_{k}|^{2})^{\frac{1}{2}}e_{3}e_{4}\big\|_{\infty}\\
    \leq&\ 2\lambda,
\end{align*}
where in the second equality we used the polar decomposition.

Finally, if we set $q=e_{1}\bigwedge e_{4}$,  then it is easy to see that this is the desired projection. Thus, we finish the proof of Corollary \ref{cor2}.
\end{proof}

\section{$(L_{\infty},\mathrm{BMO})$ estimate}
In this section, we examine the $(L_{\infty},\mathrm{BMO})$ estimate. The dyadic $\mathrm{BMO}$ spaces $\mathrm{BMO}_{d}(\mathcal A)$ were defined in the Introduction.

In order to prove conclusion (ii), it suffices to show
\begin{align}\label{94}
\Big\|
\sum_{k} T_k \hskip-1pt f \otimes e_{k1}
\Big\|_{\mathrm{BMO}_{d}(\mathcal{A})} \lesssim \,
\|f\|_\infty.
\end{align}
Indeed, (\ref{94}) is equivalent to
\begin{align}\label{95}
\Big\|
\sum_{k} T_k \hskip-1pt f \otimes e_{k1}
\Big\|_{\mathrm{BMO}_{d}^{c}(\mathcal A)} \lesssim \,
\|f\|_\infty
\end{align}
and
\begin{align}\label{96}
\Big\|
\sum_{k} T_k \hskip-1pt f \otimes e_{k1}
\Big\|_{\mathrm{BMO}_{d}^{r}(\mathcal A)} \lesssim \,
\|f\|_\infty.
\end{align}
Using the fact $\|g\|_{\mathrm{BMO}_{d}^{c}(\mathcal{A})}=\|g^{\ast}\|_{\mathrm{BMO}_{d}^{r}(\mathcal{A})}$ and taking the adjoint of both sides in (\ref{95}), we have
\begin{align*}
\Big\|
\sum_{k} T_k \hskip-1pt f \otimes e_{1k}
\big\|_{\mathrm{BMO}_{d}^{r}(\mathcal{A})}
& =\Big\|
\big(\sum_{k}T_k \hskip-1pt f \otimes e_{1k}
\big)^{\ast}\Big\|_{\mathrm{BMO}_{d}^{c}(\mathcal{A})}\\
& =\Big\|
\sum_{k} T_k \hskip-1pt f^{\ast} \otimes e_{k1}
\Big\|_{\mathrm{BMO}_{d}^{c}(\mathcal{A})}\lesssim \,
\|f^{\ast}\|_\infty=\|f\|_\infty.
\end{align*}
Similarly, we use (\ref{96}) to get
$$\Big\|
\sum_{k} T_k \hskip-1pt f \otimes e_{1k}\Big\|_{\mathrm{BMO}_{d}^{c}(\mathcal{A})} \lesssim \, \|f\|_\infty.$$
These imply
$$\Big\|
\sum_{k} T_k \hskip-1pt f \otimes e_{1k}\Big\|_{\mathrm{BMO}_{d}(\mathcal{A})} \lesssim \, \|f\|_\infty.$$

We will use the fact that in the definition of the $\mathrm{BMO}$ norm of a function
$f$, the average $f_Q$ can be replaced by any operator $\alpha_Q$ depending on $Q$ (see e.g. Lemma 1.3 of \cite{JMP}).

Now we are ready to prove the second part of Theorem \ref{t5}.
\begin{proof}
For $f\in L_{\infty}(\mathcal N)$, and a dyadic cube $Q$, we decompose $f$ as
 $f=f\1_{ 3Q}+f\1_{\R^d\setminus 3Q}\triangleq f_1+f_2$.
We shall take $\alpha_{Q,k}=T_{k}f_2(c_Q)$ and $\alpha_{Q}=\sum\limits_{k}\alpha_{Q,k}\otimes e_{k1}$, where $c_Q$ is the center of $Q$. Write $T_{k}f-\alpha_{Q,k}$ as
$$T_{k}f-\alpha_{Q,k}=(T_{k}f-T_{k}f_2)+(T_{k}f_2-\alpha_{Q,k})\triangleq B_{k1}f+B_{k2}f.$$
We first prove (\ref{95}). Using the operator convexity of square function $x\mapsto|x|^{2}$, we obtain
$$\big|\sum_{k}(T_{k}f-\alpha_{Q,k})\otimes e_{k1}\big|^{2}\leq2\big|\sum_{k}B_{k1}f\otimes e_{k1}\big|^{2}+2\big|\sum_{k}B_{k2}f\otimes e_{k1}\big|^{2}.$$
The first term $B_{1}f=\sum\limits_{k} B_{k1} \hskip-1pt f \otimes e_{k1}$ is easy to estimate. Indeed,
\begin{align*}
&\  \Big\| \big(\frac{1}{|Q|}\int_Q (B_1
f(x))^\ast (B_1f(x)) \, dx\big)^{\frac{1}{2}} \Big\|_{\M \bar\otimes
\mathcal{B}(\ell_{2})}^{2}\\
&\ =\Big\| \big(\frac{1}{|Q|}\int_Q \sum_{k}|T_{k}f_{1}(x)|^{2} \, dx\big)^{\frac{1}{2}} \Big\|^{2}_{\M}\\
& =\frac{1}{|Q|}\Big\| \int_Q \sum_{k}|T_{k}f_{1}(x)|^{2} \, dx \Big\|_{\M}\\
& = \frac{1}{|Q|} \sup_{\|a\|_{L_2(\M)} \le 1}\tau \otimes \int_Q
\sum_{k}|T_{k}f_{1}a(x)|^{2}dx\\
& \le \frac{1}{|Q|} \sup_{\|a\|_{L_2(\M)} \le 1}\tau \otimes \int_{\R^{d}}
\sum_{k}|T_{k}f_{1}a(x)|^{2}dx\\
& = \frac{1}{|Q|} \sup_{\|a\|_{L_2(\M)} \le 1} \|T(fa
1_{3Q})\|_{L_2(\mathcal{N};\ell_{2}^{rc})}^2\\
& \lesssim \frac{1}{|Q|} \sup_{\|a\|_{L_2(\M)} \le 1} \|fa
1_{3Q}\|_{2}^2\lesssim \|f\|_\infty^{2},
\end{align*}
where in the third equality, we considered elements in $\M$ as bounded linear operators on $L_{2}(\M)$ by left multiplication and in the last inequality we used the $L_2$-boundedness of $T$---Lemma \ref{t1}.

Now we turn to the second term $B_{2}f=\sum\limits_{k} B_{k2} \hskip-1pt f \otimes e_{k1}$. We have
\begin{align*}
B_{2}f(x)^{\ast}B_{2}f(x)
&\ =\sum_{k}|T_{k}f_{2}(x)-T_{k}f_{2}(c_{Q})|^{2}\\
&\ =\sum_{k}|(M_{k}f_{2}(x)-\mathsf{E}_{k}f_{2}(x))-
(M_{k}f_{2}(c_{Q})-\mathsf{E}_{k}f_{2}(c_{Q}))|^2\\
&\ \triangleq\sum_{k}|F_{k,Q}(x)|^{2}.
\end{align*}
We first claim that for any $k$ satisfying $2^{-k}<\ell(Q)$, we have for any $x\in Q$, $F_{k,Q}(x)=0.$ Indeed when $2^{-k}<\ell(Q)$, a simple geometric observation implies that both $\mathsf{E}_k f_2$ and $M_{k}f_2$ are supported in $\R^d\setminus Q$ since $f_2$ is supported in $\R^d\setminus 3Q$. For $k$ satisfying $2^{-k}\geq\ell(Q)$, we claim that for any $x\in Q$,
\begin{eqnarray}\label{97}
\|F_{k,Q}(x)\|_{\M}\lesssim2^{k}\ell(Q)\|f\|_{\infty}.
\end{eqnarray}
Indeed, note that $x$ and $c_Q$ are in the same cube in $\Q_{k}$, so $\mathsf{E}_kf_2(x)=\mathsf{E}_k f_2(c_Q)$. On the other hand,
\begin{align*}
\|M_{k} f_2(x)-M_{k} f_2(c_Q)\|_{\M} &=\frac{1}{|B_{k}|} \Big\|\int_{B_{k}+x}f_2(y)dy -\int_{B_{k}+ c_Q}f_2(y)dy\Big\|_{\M} \\
&=2^{kd} \Big\|\int_{\R^d}f_2(y)( \1_{ (B_{k}+ c_Q)\setminus  (B_{k}+x)}-\1_{(B_{k} +x)\setminus (B_{k}+c_Q)})(y)dy\Big\|_{\M} \\
&\leq2^{kd}|  (B_{k}+ c_Q) \Delta  (B_{k}+x) |\cdot \|f\| _{\infty}.
\end{align*}
Then the fact that $|(B_{k}+ c_Q) \Delta (B_{k}+x) |\leq C_{d}2^{-k(d-1)}|x-c_Q|$ yields
$$\|M_{k} f_2(x)-M_{k} f_2(c_Q)\|_{\M}\leq C_{d}2^{k}|x-c_Q|\|f\|_{\infty}\lesssim 2^{k}\ell(Q)\|f\|_{\infty}.$$
This is precisely claim (\ref{97}).
Putting all these observations together, we obtain
\begin{align*}
 &\  \Big\| \Big(\frac{1}{|Q|}\int_Q (B_2
f(x))^\ast (B_2f(x)) \, dx\Big)^{\frac{1}{2}} \Big\|_{\M \bar\otimes
\mathcal{B}(\ell_{2})}^{2}\\
 &\ =\Big\| \Big(\frac{1}{|Q|}\int_Q |Tf_2(x)-\alpha_Q\Big|^2\, dx\Big)^{\frac{1}{2}} \Big\|^{2}_{\M}\\
&=\frac{1}{|Q|}\Big\|\int_{Q}\sum_{k}|F_{k,Q}(x)|^{2}\, dx\Big\|_{\M}\\
&\leq\frac{1}{|Q|}\int_{Q}\sum_{k}\|F_{k,Q}(x)\|_{\M}^{2}\, dx\\
&\lesssim \ell(Q)^{2}\cdot  \|f\|^{2}_{\infty}\sum_{k:2^{-k} \geq\ell(Q)}2^{2k}\lesssim \|f\|^{2}_{\infty}.
\end{align*}
This yields the desired estimate.

We now consider (\ref{96}). Here we only need to deal with $B_{1}f=\sum\limits_{k} B_{k1} \hskip-1pt f \otimes e_{k1}$, while $B_{2}f$ can be treated as before.
We note that
\begin{align*}
& \Big\| \Big(\frac{1}{|Q|}\int_Q (B_1
f(x)) (B_1f(x))^\ast \, dx\Big)^{\frac{1}{2}} \Big\|_{\M \bar\otimes
\mathcal{B}(\ell_{2})}^{2}\\
& = \frac{1}{|Q|} \Big\| \int_Q (B_1 \hskip-1pt
f(x))(B_1 \hskip-1pt f(x))^\ast \, dx \Big\|_{\M \bar\otimes
\mathcal{B}(\ell_{2})}\\
& = \frac{1}{|Q|} \Big\|
\sum_{k_1,k_2} \Big[ \int_Q T_{k_1}f_{1}(x)
T_{k_2} f_{1}^\ast(x) \, dx \Big] \otimes e_{k_1,k_2} \Big\|_{\M
\bar\otimes \mathcal{B}(\ell_{2})} \\
&\triangleq \frac{1}{|Q|}
\|\Lambda\|_{\M \bar\otimes \mathcal{B}(\ell_{2})}.
\end{align*}
Since $\Lambda$ is a positive operator acting on $\ell_{2}(L_2(\M))$ $(=L_2(\M;\ell_{2}^{rc}))$, we have
\begin{align*}
&\ \frac{1}{|Q|} \Big\| \int_Q (B_1 \hskip-1pt
f(x))(B_1 \hskip-1pt f(x))^\ast \, dx \Big\|_{\M \bar\otimes
\mathcal{B}(\ell_{2})}\\  & = \frac{1}{|Q|} \sup_{\|a\|_{L_2(\M;\ell_{2}^{rc})} \le 1} \big\langle \Lambda a, a \big\rangle \\
& =  \frac{1}{|Q|} \sup_{\|a\|_{L_2(\M;\ell_{2}^{rc})} \le 1} \hskip4.7pt
\tau \Big( \big[ \sum_{k_1} a_{k_1}^\ast \otimes e_{1k_1}
\big] \, \Lambda \,
\big[ \sum_{k_2} a_{k_2} \otimes e_{k_21} \big] \Big) \\
& = \frac{1}{|Q|} \sup_{\|a\|_{L_2(\M;\ell_{2}^{rc})} \le 1}
\hskip4.7pt \tau \otimes \int_Q \Big| \sum_{k}
T_k f_{1}^\ast(x) a_k \Big|^2 \, dx \\
& \leq \frac{1}{|Q|} \sup_{\|a\|_{L_2(\M;\ell_{2}^{rc})} \le 1}
\hskip4.7pt \tau \otimes \int_{\R^d} \Big| \sum_{k}
T_k (f_{1}^\ast a_k )(x) \Big|^2 \, dx \\
& \lesssim  \frac{1}{|Q|} \sup_{\|a\|_{L_2(\M;\ell_{2}^{rc})} \le 1}
\hskip4.7pt \tau \otimes \int_{\R^d}\sum_{k}
|f_{1}^\ast a_k(x)|^2 \, dx \\
& \le  \frac{1}{|Q|}
\sup_{\|a\|_{L_2(\M;\ell_{2}^{rc})} \le 1}
\hskip4.7pt \tau (\sum_{k}|a_{k}|^2) |3Q| \, \|f\|_\infty^2 \ \lesssim
\ \|f\|_\infty^2,
\end{align*}
where in the second inequality we used the $L_{2}(\mathcal{N})$-boundedness of the adjoint of $T$. This proves (\ref{96}). Therefore, the estimates obtained so far together with their row analogues give rise to
$$\max \Big\{ \Big\|
\sum_{k} T_k \hskip-1pt f \otimes e_{k1}
\Big\|_{\mathrm{BMO}_{d}(\mathcal{A})}, \Big\|
\sum_{k} T_k \hskip-1pt f \otimes e_{1k}
\Big\|_{\mathrm{BMO}_{d}(\mathcal{A})} \Big\} \lesssim \|f\|_\infty.$$
This completes the
BMO estimate.
\end{proof}

\section{Strong type $(p,p)$ estimates}
In this section, we show the strong type $(p,p)$ estimate of $(T_k)$ for $1<p<\infty$. 
\begin{prop}\label{11}
Let $1<p<\infty$. Then $(T_k)$ is bounded from $L_{p}(\mathcal{N})$ to $L_p(\mathcal{N};
\ell_{2}^{rc})$.
\end{prop}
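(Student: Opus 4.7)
The plan is to split into the two ranges $1<p<2$ and $2\le p<\infty$ and interpolate in each, with $p=2$ already covered by Lemma \ref{t1}.

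For $1<p<2$, I would first invoke the noncommutative Khintchine inequality in $L_p$ (see \cite{LG}) applied to a Rademacher sequence $(\varepsilon_k)$ on a probability space $(\Omega,P)$, giving
$$\|(T_k f)\|_{L_p(\mathcal N;\ell_2^{rc})} \backsimeq \|Tf\|_{L_p(L_\infty(\Omega)\bar\otimes\mathcal N)},$$
where $T=\sum_k\varepsilon_k T_k$ is the linearized operator from \eqref{9898}. The matter is thereby reduced to showing a strong $(p,p)$ estimate for the \emph{linear} operator $T$. From conclusion (i) of Theorem \ref{t5} and Corollary \ref{inte} one obtains
$$\|Tf\|_{L_{1,\infty}(L_\infty(\Omega)\bar\otimes\mathcal N)} \lesssim \|f\|_1,$$
while Lemma \ref{t1} yields $\|Tf\|_2 \lesssim \|f\|_2$. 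Real interpolation between these two endpoints in the noncommutative setting then produces $\|Tf\|_p\lesssim\|f\|_p$, and applying Khintchine a second time transfers this back to the $\ell_2^{rc}$-valued bound for $(T_k)$.

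For $2\le p<\infty$, I would work instead with the two linear operators
$$\Phi^c(f) = \sum_k T_k f \otimes e_{k1}, \qquad \Phi^r(f) = \sum_k T_k f \otimes e_{1k},$$
both regarded as maps into $L_p(\mathcal A)=L_p(\mathcal N\bar\otimes\mathcal B(\ell_2))$. By the very definition of the column and row $\ell_2$-valued spaces,
$$\|\Phi^c(f)\|_{L_p(\mathcal A)} = \|(T_k f)\|_{L_p(\mathcal N;\ell_2^c)}, \qquad \|\Phi^r(f)\|_{L_p(\mathcal A)} = \|(T_k f)\|_{L_p(\mathcal N;\ell_2^r)},$$
and for $p\ge 2$ the intersection norm $\|\cdot\|_{L_p(\mathcal N;\ell_2^{rc})}$ is just the maximum of these two quantities. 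Lemma \ref{t1} supplies the $L_2$-boundedness of both $\Phi^c$ and $\Phi^r$, while conclusion (ii) of Theorem \ref{t5} gives their $(L_\infty,\mathrm{BMO}_d(\mathcal A))$-boundedness. Applying complex interpolation between $\mathrm{BMO}_d$ and $L_2$ in the noncommutative framework (as in \cite{M,MP}) with parameter $\theta=2/p$ then yields the required $L_p$-bounds for $\Phi^c$ and $\Phi^r$, which combined give the $\ell_2^{rc}$-valued estimate.

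The main technical point to watch is the availability and correctness of the two interpolation arguments in the operator-valued, noncommutative context: first, that noncommutative Marcinkiewicz-type real interpolation between $L_{1,\infty}$ and $L_2$ of $L_\infty(\Omega)\bar\otimes\mathcal N$ legitimately produces $L_p$, and second, that the complex interpolation identity $[\mathrm{BMO}_d(\mathcal A),L_2(\mathcal A)]_{2/p}=L_p(\mathcal A)$ applies to the dyadic BMO space used in Theorem \ref{t5}(ii). Both are known, but one needs to be careful to align the dyadic BMO appearing in the endpoint estimate with the interpolation scale used in Musat's theorem; no other serious obstacle is expected.
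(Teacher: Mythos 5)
Your proposal is correct and follows essentially the same route as the paper: $p=2$ from Lemma~\ref{t1}; $1<p<2$ by linearizing via noncommutative Khintchine, real interpolation between the weak $(1,1)$ estimate and $L_2$, and transferring back; $2\le p<\infty$ via the column/row operators $\Phi^c,\Phi^r$ and complex interpolation between $L_2$ and $\mathrm{BMO}_d$. The only slight difference is that for $p\ge 2$ you correctly observe that the identities $\|\Phi^c(f)\|_{L_p(\mathcal A)}=\|(T_kf)\|_{L_p(\mathcal N;\ell_2^c)}$ hold by direct computation rather than invoking Khintchine, which is cleaner than the paper's phrasing.
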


\begin{proof}
The result for $p=2$ is just Lemma \ref{t1}. For the case of $1<p<2$,
using the weak tyep $(1,1)$ estimate of ${T}$ obtained in Section 3 and Lemma \ref{t1}, we conclude that ${T}$ is bounded from $L_{p}(\mathcal{N})$ to $L_p(L_{\infty}(\Omega)\overline{\otimes}\mathcal{N})$ by real interpolation. Thus $(T_k)$ is bounded from $L_{p}(\mathcal{N})$ to $L_p(\mathcal{N};
\ell_{2}^{rc})$ thanks to noncommutative Khintchine's inequalities \cite{Pis09, PiRi17}.

We now turn to the case of $2<p<\infty$.
If we set $T_{c}f=\sum\limits_{k} T_k \hskip-1pt f \otimes e_{k1}$ and $T_{r}f=\sum\limits_{k} T_k \hskip-1pt f \otimes e_{1k}$, then Lemma \ref{t1} in conjunction with $\mathrm{BMO}$ type estimate yields that $T_{c}$ and $T_{r}$ are bounded from $L_p(\mathcal N)$ to $L_p(\mathcal{N}\bar\otimes\mathcal B({\ell_2}))$ by complex interpolation \cite{Mu}. Therefore, $(T_k)$ is bounded from $L_p(\mathcal N)$ to $L_p(\mathcal{N};
\ell_{2}^{rc})$ for all $2\leq p<\infty$ by noncommutative Khintchine's inequalities.
\end{proof}

\section*{Appendix. Proof of Lemma \ref{t1}---$L_2$-boundedness }
As mentioned in the introduction, the noncommutative $L_2$-boundedness of the square function follows trivially from the corresponding commutative result. However, in this appendix, we provide a proof using similar idea as presented in \cite[Theorem 2.3]{HM1} but not using the commutative result as a black box. It is this proof that inspires us to complete the whole paper.

\begin{proof}[Proof of Lemma \ref{t1}.]
Let $h\in L_{2}(\mathcal N)$. Without loss of generality,
we can assume that $h$  is positive.
Let $h=\sum\limits_{n\in\Z}dh_{n}$, since martingale differences are orthogonal, we have
$$\sum_{n\in\Z}\|dh_{n}\|_{2}^{2}=\|h\|_{2}^{2}.$$
On the other hand,
$$\|Th\|_{L_2(L_{\infty}(\Omega)\overline{\otimes}\mathcal{N})}^{2}
=\sum_{k}\|(M_{k}-\mathsf{E}_{k})\sum_{n}dh_{n}\|_{2}^{2}.$$
To this end, according to the almost orthogonality principle---Lemma \ref{L1}, it suffices to prove
\begin{align}\label{3}
\|(M_{k}-\mathsf{E}_k)dh_{n} \|^{2}_{2}\lesssim 2^{-|n-k|}\|dh_{n} \|^{2}_{2}.
\end{align}
We first prove (\ref{3}) in the case $k\geq n$. Note that $\mathsf{E}_kdh_{n}=dh_{n}$ for $k\geq n$. It is enough to show
\begin{align}\label{4}
\|M_{k}dh_{n}-dh_n\|^{2}_{2}\lesssim 2^{n-k}\|dh_n\|^{2}_{2}.
\end{align}
We write
\begin{align*}
    \|M_{k}dh_n-dh_n\|^{2}_{2}
    =&\ \int_{\R^d}\|M_{k}dh_n(x)-dh_n(x)\|^{2}_{L_{2}(\M)}dx\\
    =&\ \sum_{H\in\Q_{n}}\int_H\|M_{k}dh_n(x)-dh_n(x)\|^{2}_{L_{2}(\M)}dx.
\end{align*}
Since $dh_{n}$ is a constant operator on $H\in\Q_{n}$, we have $M_{k}dh_n(x)-dh_n(x)=0$ if $x+ B_{k}\subset H$. Thus, for $x\in H$, $(M_{k}dh_n-dh_n)(x)$ may be nonzero only if $x+B_{k}$ intersects with the complement of $H$---$H^c$. Hence, for a given atom $H$ and a given Euclidean ball $B$ in $\R^{d}$, we define
  $$\mathcal{H}(B,H)= \{x\in H|x+B\cap H^c\neq \emptyset\}.$$
For a fixed $H\in\Q_{n}$, it is easy to observe that
  $$|\mathcal{H} (B_{k},H)|\lesssim2^{(d-1)(-n)}\cdot 2^{-k}.$$
Let $m_H$ be the maximum of $\|dh_n(x)\|_{L_{2}(\M)}$ on $H$ and the cubes in $\Q_{n}$ neighboring $H$. Then $\|M_{k}dh_n(x)-dh_n(x)\|_{L_{2}(\M)} \leq 2m_H$ for every $x\in H$. Therefore
\begin{align}\label{10}
 \int_H \|M_{k}dh_n(x)-dh_n(x)\|^{2}_{L_{2}(\M)}dx\lesssim2^{(d-1)(-n)}\cdot2^{-k}\cdot m_H^{2}\lesssim2^{n-k} \int_Hm_H^{2}dx.
\end{align}
Since $m_H$ is a constant on $H$, then $\int_{\R^d}m^{2}_H \leq  C_d\int_{\R^d}\|dh_n(x)\|^{2}_{L_{2}(\M)}dx$. Thus, summing over all $H\in\Q_{n}$ in (\ref{10}), we obtain the desired estimate (\ref{4}).

Now we handle  (\ref{3}) in the case $n> k$. Note that $\mathsf{E}_kdh_n=0$ in this case. Hence, it suffices to prove
\begin{align}\label{5}
\|M_{k}dh_n\|^{2}_2\lesssim2^{k-n}\|dh_n\|^{2}_{2}.
\end{align}
Note that by the cancellation property of $h_n$ over atoms in $\Q_{n-1}$, we see that $M_{k}dh_n=M_{k,n-1}dh_n$. Hence, we use Lemma \ref{kn} to obtain (\ref{5}). Therefore, the proof of Lemma \ref{t1} is complete.
\end{proof}

\end{document}